\begin{document}

\newcommand{\out}{\ensuremath{\mathrm{Out}(\mathbb{F}_N) } }
\newcommand{\aut}{\ensuremath{\mathrm{Aut}(\mathbb{F}_N) }}
\newcommand{\F}{\ensuremath{\mathbb{F }_N } }
\newtheorem{theorem}{Theorem}[section]
\newtheorem{corollary}[theorem]{Corollary}
\newtheorem{lemma}[theorem]{Lemma}
\newtheorem{proposition}[theorem]{Proposition}
\newtheorem{remark}[theorem]{Remark}
\newtheorem{definition}[theorem]{Definition}

\title{Applications of Weak attraction theory in $\out$}



\author{Pritam Ghosh}



\begin{abstract}
Given a finite rank free group $\F$ of rank $\geq 3$ and two exponentially growing outer automorphisms $\psi$ and $\phi$ with dual lamination pairs $\Lambda^\pm_\psi$ and $\Lambda^\pm_\phi$ associated to them, 
which satisfy a notion of independence described in this paper, 
    we will use the pingpong techniques developed by Handel and Mosher \citep{HM-13c} to show that there exists an integer $M > 0$, 
    such that for every $m,n\geq M$, the group $G=\langle \psi^m,\phi^n \rangle $ will be a free group of rank two and 
    every element of this free group which is not conjugate to a power of the generators will be fully irreducible and hyperbolic. 

\end{abstract}

\maketitle

\section{Introduction}
\label{intro}

Let $\F$ be a free group of rank $N\geq 3$. The quotient group 

$\aut/\mathrm{Inn}(\F)$, denoted by $\out$,  is called the group  of outer automorphisms of $\F$. There are many tools in studying the properties of this group. One of 
them is by using train-track maps introduced in \citep{BH-92} and later generalized in \citep{BFH-97}, \citep{BFH-00}, \citep{FH-11}. The fully-irreducible 
outer automorphisms are the most well understood elements in $\out$ . They behave very closely to the pseudo-Anosov homeomorphisms of surfaces with one boundary component, 
which have been well understood and are a rich source of examples and interesting theorems. We however, 
will focus on exponentially growing outer automorphisms which might not be fully irreducible but exhibit some properties similar to fully-irreducible elements.
It was shown by Bestvina-Feighn\citep{BF-92} and Brinkmann \citep{Br-00} that an outer automorphism was \textit{hyperbolic} if and only if it did not have any periodic conjugacy classes. 
This is an interesting class of elements of $\out$ as we will 
see later.

We introduce a notion of pairwise independence (see definition \ref{independent}) for exponentially growing elements $\psi,\phi \in \out$  equipped with some dual lamination pairs $\Lambda^\pm_\psi,\Lambda^\pm_\phi$, respectively. 
 The definition of pairwise independence that we see here is an abstraction of properties that independent pseudo-Anosovs enjoy in case of mapping class groups. Namely, $\Lambda^\epsilon_\phi$ is left invariant by $\phi^\epsilon$, 
 where $\epsilon = +,-$ (similarly for $\psi$) and that $\{\Lambda^\pm_\psi\} \cup \{\Lambda^\pm_\phi\}$ fills and are attracted to 
each other under iteration of the appropriate automorphism and there does not 
exist any nontrivial conjugacy class that is neither attracted to $\Lambda^+_\phi$ nor to $\Lambda^+_\psi$. In short, we filter out some important dynamical behavior of these automorphisms that we believe should be obvious plus add some conditions 
on geometricity properties of laminations of the elements of subgroup generated by $\psi$ and $\phi$.  
More discussions on this follows after the statement of the theorem.

We now state our main theorem:

\vspace{0.2 cm}
\textbf{ Theorem \ref{main}} \textit{Let $\phi , \psi $ be two  exponentially growing elements of $\out$, such that there exists some dual lamination pairs $\Lambda^\pm_\phi$ and $\Lambda^\pm_\psi$, which makes $\phi$, $\psi$ pairwise independent. 
Then there exists an $M\geq0$, 
such that for all $n,m\geq M$ the group $\langle\psi^m,\phi^n\rangle$ will be free of rank two and every element of this free group, 
not conjugate (in $G$) to some power of the generators, will be hyperbolic and fully-irreducible.}
\vspace{0.4 cm}

A corollary of the main theorem is a theorem of Kapovich-Lustig.

 (Theorem 6.2, \citep{KL-10}) If $\phi,\psi\in\out$ are fully irreducible hyperbolic elements so that if $\langle \phi, \psi \rangle$ is not virtually cyclic, then there exits $m,n\geq 1$ so that $\langle \psi^m,\phi^n \rangle$ 
 is a free group of rank two and every nontrivial element of the group is also fully irreducible and hyperbolic.

 With the developments that have been made recently, the above result can be obtained by a geometric pingpong argument. It follows from the work of Bestvina-Feighn \citep{BF-12}  and Mann \citep{Mann}.
 The loxodromic elements of the Bestvina-Feighn Free factor complex are the fully irreducible outer automorphisms, and the loxodromic elements of Mann's ``co-surface'' graph are the hyperbolic fully-irreducible 
 outer automorphisms. However, neither the main theorem of this paper nor any of it's corollaries, except for the one mentioned above, can be fully obtained by any geometric pingpong arguments as of now.

 Another interesting theorem in somewhat similar spirits was obtained by Clay and Pettet in \citep{CP-10} and by Gultepe \citep{Gul}. They show that for sufficiently independent Dehn-twists in $\out$, they generate a free group 
all whose elements, not conjugate to some power of the generators, are hyperbolic and fully irreducible. Their proofs are however very different from each other. Gultepe uses geometric pingpong on free factor complex and 
co-surface graph whereas Clay and Pettet use Guirardel's core and more tree-centric arguments.
 
The fact that sufficiently high powers of independent fully irreducible outer automorphisms generate a free group of rank two was shown first in Proposition 3.7 of \citep{BFH-97}. 
In the theorem of Kapovich and Lustig the generating outer automorphisms are assumed to be fully irreducible and hyperbolic, but our assumptions are much weaker. Not only does their theorem follow from ours, but by our \textbf{Corollary} \ref{app} and 
\textbf{Theorem} \ref{anosov} we give a complete case analysis of how all the elements of such a free group of rank two will look like,
depending on the nature of $\phi,\psi$ - not just the case when both are hyperbolic. One key difference between the work of Kapovich-Lustig and this paper is the tools that have been used. They prove their results using 
\textit{geodesic currents in free groups} where as we use train-track theory.

One of the key motivations behind this problem, other than studying the dynamics of exponentially growing elements, is to use it for understanding the correct notion 
of convex cocompactness in $\out$. This was inspired by the following theorem of Farb-Mosher


\textbf{Theorem:}(Theorem 1.4, \citep{FaM-02}) If $\phi_1,......,\phi_k \in \mathcal{MCG}(S)$ are pairwise independent pseudo-Anosov elements, then for all sufficiently large positive integers $a_1,.....,a_k$ 
the mapping classes $\phi^{a_1}_1,......,\phi^{a_k}_k $ 
will freely generate a Schottky subgroup $F$ of $\mathcal{MCG}(S)$ .

The notion of independence of the pseudo-Anosov elements in 

$\mathcal{MCG}(\mathcal{S})$ is equivalent to the property that the attracting and repelling laminations of the two elements are 
mutually transverse on the surface, 
from which it follows that the collection of laminations fills (in fact they individually fill). 
These filling properties are enjoyed by fully irreducible elements in $\out$. 
But exponentially growing elements which are not fully irreducible might not have this property. In the definition if \textquotedblleft pairwise independence \textquotedblright \ref{independent} 
we list the properties similar to pseudo-Anosov maps that make the aforementioned theorem work.
As a consequence of the techniques developed to prove Theorem \ref{main}, we also prove the following theorem which is very similar to the theorem of Farb and Mosher, 
except that in our case we have surfaces with one boundary component.

\vspace{0.2cm}

\textbf{Theorem \ref{anosov}} Let $\mathcal{S}$ be a connected, compact surface (not necessarily oriented) with one boundary component. Let $f,g \in \mathcal{MCG(S)}$ be psedo-Anosov 
homeomorphisms of the surface which are not conjugate to powers of each other. 
Then there exists some integer $M$ such that the group $G= \langle f^m, g^n \rangle$ will be free for every $m,n> M$, 
and every element of this group will be a pseudo-Anosov. 

The techniques we use to prove our theorems were developed in \citep{BH-92}, \citep{BFH-97}, \citep{BFH-00}, \citep{FH-11}, \citep{HM-09} and \citep{HM-13c}. Given pairwise independent exponentially growing elements of $\out$, 
we use pingpong type argument developed in \citep{HM-13c} to produce exponentially growing elements which are hyperbolic. 
Then we proceed to show that these elements will be fully irreducible by using Stallings graphs (defined in section \ref{sec:11}).
Finally we prove Theorem \ref{anosov} (which is a theorem about surface homeomorphisms) as an application of the tools developed here to study $\out$.

\textbf{Acknowledgment:} This work was a part of author's Ph.D thesis at Rutgers University, Newark. 
The author would like to thank his advisor Dr. Lee Mosher for his continuous encouragement and support. This work was partially supported by Dr Lee Mosher's NSF grant.

\section{Preliminaries}
\label{sec:1}
In this section we give the reader a short review of the defintions and some important results in $\out$ which are revelant to the theorem that we are trying to prove here. All the results which are stated as facts, can be found in full details in \citep{BFH-00}, \citep{FH-11}, \citep{HM-13c}. 

\subsection{Weak topology}
\label{sec:2}
Given any finite graph $R$, let $\widehat{\mathcal{B}}(G)$ denote the compact space of equivalence classes of circuits in $G$ and paths in $G$, whose endpoints (if any) are vertices of $G$. We give this space the \textit{weak topology}.
Namely, for each finite path $\gamma$ in $G$, we have one basis element $\widehat{N}(G,\gamma)$ which contains all paths and circuits in $\widehat{\mathcal{B}}(G)$ which have $\gamma$ as its subpath. 
Let $\mathcal{B}(G)\subset \widehat{\mathcal{B}}(G)$ be the compact subspace of all lines in $G$ with the induced topology. One can give an equivalent description of $\mathcal{B}(G)$ following \citep{BFH-00}.
A line is completely determined, upto reversal of direction, by two distinct points in $\partial \F$, since there only one line that joins these two points. We can then induce the weak topology on the set of lines coming from the Cantor set $\partial \F$. More explicitly,
let $\widetilde{\mathcal{B}}=\{ \partial \F \times \partial \F - \vartriangle \}/(\mathbb{Z}_2)$, where $\vartriangle$ is the diagonal and $\mathbb{Z}_2$ acts by interchanging factors. We can put the weak topology on 
$\widetilde{\mathcal{B}}$, induced by Cantor topology on $\partial \F$. The group $\F$ acts on $\widetilde{\mathcal{B}}$ with a compact but non-Hausdorff quotient space $\mathcal{B}=\widetilde{\mathcal{B}}/\F$. 
The quotient topology is also called the \textit{weak topology}. Elements of $\mathcal{B}$ are called \textit{lines}. A lift of a line $\gamma \in \mathcal{B}$ is an element  $\widetilde{\gamma}\in \widetilde{\mathcal{B}}$ that 
projects to $\gamma$ under the quotient map and the two elements of $\widetilde{\gamma}$ are called its endpoints. 

One can naturally identify the two spaces $\mathcal{B}(G)$ and $\mathcal{B}$ by considering a homeomorphism between the two Cantor sets $\partial \F$ and set of ends of universal cover of $G$ , where $G$ is a marked graph.
$\out$ $\curvearrowright \mathcal{B}$. The actions comes from the action of Aut($\F$) on $\partial \F$. Given any two marked graphs $G,G'$ and a homotopy equivalence $f:G\rightarrow G'$ between them, the induced map 
$f_\#: \widehat{\mathcal{B}}(G)\rightarrow \widehat{\mathcal{B}}(G')$ is continuous and the restriction $f_\#:\mathcal{B}(G)\rightarrow \mathcal{B}(G')$ is a homeomorphism. With respect to the identification 
$\mathcal{B}(G)\approx \mathcal{B}\approx \mathcal{B}(G')$, if $f$ preserves the marking then $f_{\#}:\mathcal{B}(G)\rightarrow \mathcal{B}(G')$ is equal to the identity map on $\mathcal{B}$. When $G=G'$, $f_{\#}$ agree with their
 homeomorphism $\mathcal{B}\rightarrow \mathcal{B}$ induced by the outer automorphism associated to $f$.
 
A line(path) $\gamma$ is said to be \textit{weakly attracted} to a line(path) $\beta$ under the action of $\phi\in\out$, if the $\phi^k(\gamma)$ converges to $\beta$ in the weak topology. This is same as saying, for any given finite subpath of $\beta$, $\phi^k(\gamma)$ 
contains that subpath for some value of $k$; similarly if we have a homotopy equivalence $f:G\rightarrow G$,  a line(path) $\gamma$ is said to be \textit{weakly attracted} to a line(path) $\beta$ under the action of $f_{\#}$ if the $f_{\#}^k(\gamma)$ converges to $\beta$
 in the weak topology. The \textit{accumulation set} of a ray $\gamma$ in $G$ is the set of lines  $l\in \mathcal{B}(G)$ which are elements of the weak closure of $\gamma$; which is same as saying every finite subpath of $l$ 
occurs infinitely many times as a subpath $\gamma$.

\subsection{Free factor systems and subgroup systems}
\label{sec:3}
Given a finite collection $\{K_1, K_2,.....,K_s\}$ of subgroups of $\F$ , we say that this collection determines a \textit{free factorization} of $\F$ if $\F$ is the free product of these subgroups, that is, 
$\F = K_1 * K_2 * .....* K_s$. The conjugacy class of a subgroup is denoted by [$K_i$].

A \textit{free factor system} is a finite collection of conjugacy classes of subgroups of $\F$ , $\mathcal{K}:=\{[K_1], [K_2],.... [K_p]\}$ such that there is a free factorization of $\F$ of the form 
$\F = K_1 * K_2 * ....* B$, where $B$ is some finite rank subgroup of $\F$ (it may be trivial). Given two free factor systems $\mathcal{K}, \mathcal{K}'$ we give a partial ordering to set of all free factor systems by defining 
$\mathcal{K}\sqsubset \mathcal{K}'$ if for each conjugacy class of subgroup $[K]\in \mathcal{K}$ there exists some conjugacy class of subgroup $[K']\in \mathcal{K}'$ such that $K$ is a free factor of $K'$.

There is an action of $\out$ on the set of all conjugacy classes of subgroups of $\F$. This action induces an action of $\out$ on the set of all free factor systems. For notation simplicity we will avoid writing $[K]$ all the time and write $K$ instead, when we discuss   
the action of $\out$ on this conjugacy class of subgroup $K$ or anything regarding the conjugacy class [$K$]. It will be understood that we actually mean [$K$].
\begin{lemma}
 [\citep{BFH-00}, Section 2.6] Every collection $\{\mathcal{K}_i\}$ of free factor systems has a well-defined meet $\wedge\{\mathcal{K}_i\} $, which is the unique maximal free factor system $\mathcal{K}$ such that $\mathcal{K}\sqsubset \mathcal{K}_i$ for all $i$. Moreover, 
 for any free factor $F< \F$ we have $[F]\in \wedge\{\mathcal{K}_i\}$ if and only if there exists an indexed collection of subgroups $\{F_i\}_{i\in I}$ such that $[A_i]\in \mathcal{K}_i$ for each $i$ and $A=\bigcap_{i\in I} A_i$. 
\end{lemma}

For any marked graph $G$ and any subgraph $H \subset G$, the fundamental groups of the noncontractible components of $H$ form a free factor system . We denote this by $[H]$. A subgraph of $G$ which has no valence 1 vertex is called a \textit{core graph}. 
Every subgraph has a unique core graph, which is a deformation retract of its noncontractible components.
A free factor system $\mathcal{K}$ \textit{carries a conjugacy class} $[c]$ in $\F$ if there exists some $[K] \in \mathcal{K}$ such that $c\in K$. We say that $\mathcal{K}$ \textit{carries the line} $\gamma \in \mathcal{B}$ if for any marked graph $G$ 
the realization of $\gamma$ in $G$ is the weak limit of a sequence of circuits in $G$ each of which is carried by $\mathcal{K}$.
An equivalent way of saying this is: for any marked graph $G$ and a subgraph $H \subset G $ with $[H]=\mathcal{K}$, the realization of $\gamma$ in $G$ is contained in $H$.

Similarly define a \textit{subgrpoup system} $\mathcal{A} = \{[H_1], [H_2], .... ,[H_k]\}$ to be a finite collection of conjugacy classes of finite rank subgroups $H_i<\F$. Define a subgroup system to be \textit{malnormal} if for any $[H_i],[H_j]\in \mathcal{A}$, 
if $H_i^x\cap H_j$ is nontrivial then $i=j$ and $x\in H_i$. Two subgroup systems $\mathcal{A}$ and $\mathcal{A}'$ are said to be \textit{\textbf{mutually malnormal}} if both $H_i^x\cap H'_j$ and $H_i\cap (H'_j)^x$ are trivial for every 
$[H_i]\in \mathcal{A}, [H'_j]\in \mathcal{A}'$ and $x\in \F$

A subgroup system $\mathcal{A}$ carries a conjugacy class $[c]\in \F$ if there exists some $[A]\in\mathcal{A}$ such that $c\in A$. Also, we say that $\mathcal{A}$ carries a line $\gamma$ if one of the following equivalent conditions hold:
\begin{itemize}
 \item $\gamma$ is the weak limit of a sequence of conjugacy classes carries by $\mathcal{A}$.
 \item There exists some $[A]\in \mathcal{A}$ and a lift $\widetilde{\gamma}$ of $\gamma$ so that the endpoints of $\widetilde{\gamma}$ are in $\partial A$.
 
\end{itemize}
The following fact is an important property of lines carried by a subgroup system. The proof is by using the observation that $A<\F$ is of finite rank implies that $\partial A$ is a compact subset of $\partial \F$
\begin{lemma}
 For each subgroup system $\mathcal{A}$ the set of lines carried by $\mathcal{A}$ is a closed subset of $\mathcal{B}$
\end{lemma}

So by definition, a line $\gamma$ is carried by a conjugacy class (circuit) $\rho$ means that $\gamma$ is a bi-infinite iteration of $\rho$, denoted by $\rho_\infty$.

From \citep{BFH-00}
The \textit{free factor support} of a set of lines $B$ in $\mathcal{B}$ is (denoted by $\mathcal{A}_{supp}(B)$) defined as the meet of all free factor systems that carries $B$. If $B$ is a single line then $\mathcal{A}_{supp}(B)$ is 
single free factor. We say that a set of lines, $B$, is \textit{filling} if $\mathcal{A}_{supp}(B)=[\F]$

\subsection{Principal automorphisms and rotationless automorphisms}
\label{sec:4}

Given an outer automorphism $\phi\in\out$, we can consider a lift $\Phi$ in $\aut$. We call a lift principal automorphism, if it satisfies certain conditions described below. 
Roughly speaking, what such lifts guarantees is the the existence of certain lines which are not a part of the attracting lamination but it still 
fills the free group $\F$. Such lines (called \textit{singular lines}) will be a key tool in describing the set of lines which are not attracted to the attracting lamination of $\phi$.\\
Consider $\phi\in \out$ and a lift $\Phi$ in $\aut$. $\Phi$ has an action on $\F$, which has a fixed subgroup denoted by Fix$(\Phi)$. Consider the boundary of this fixed subgroup $\partial$Fix$(\Phi)\subset$ $\partial \F$. It is either empty or has exactly two points.\\
This action action extends to the boundary and is denoted by $\widehat{\Phi}:\partial \F\rightarrow\partial \F$. Let Fix$(\widehat{\Phi})$ denote the set of fixed points of this action. We call an element $P$ of Fix$(\widehat{\Phi})$ \textit{attracting fixed point} if there exists an open neighborhood $U\subset \partial \F$ of $P$ such that we have $\widehat{\Phi}(U)\subset U$ 
and for every points $Q\in U$ the sequence $\widehat{\Phi}^n(Q)$ converges to $P$. Let Fix$_+(\widehat{\Phi})$ denote the set of attracting fixed points of Fix$(\widehat{\Phi})$. Similarly let Fix$_-(\widehat{\Phi})$ denote the attracting fixed points of Fix$(\widehat{\Phi}^{-1})$. \\
Let Fix$_N(\widehat{\Phi})=$ Fix$(\widehat{\Phi})$ $\cup$ Fix$_-(\widehat{\Phi})= \partial$Fix$(\Phi)$ $\cup$ Fix$_+(\widehat{\Phi})$. We say that an automorphism $\Phi\in\aut$ in the outer automorphism class of $\phi$ is a \textit{principal automorphism} if Fix$_N(\widehat{\Phi})$ has at least 3 points or Fix$_N(\widehat{\Phi})$ has exactly two points which are neither the endpoints of an axis of a covering translation, nor the endpoints of a 
 generic leaf of an attracting lamination $\Lambda^+_\phi$. The set of all principal automorphisms of $\phi$ is denoted by $P(\phi)$. This set is invariant under isogredience.\\
We then have the following lemma from [\citep{GJLL-98}] and \citep{HM-09}:\\
\begin{lemma}
\label{fixedpoints}
 If $\phi\in \out$ is fully irreducible and $\Phi$ is a principal automorphism representing $\phi$, then:
\begin{enumerate}
 \item If Fix$(\Phi)$ is trivial then Fix$_N(\widehat{\Phi})$ is a finite set of attractors.
\item If Fix$(\Phi)=\langle\gamma\rangle$ is infinite cyclic, then Fix$_N(\widehat{\Phi})$ is the union of the endpoints of the axis of the covering translation $t_\gamma^\pm$ with a finite set of $t_\gamma$-orbits of attractors.
\item If $P\in$Fix$_N(\widehat{\Phi})$ is an attractor then it is not the end points of an axis of any covering translation $t_\gamma$.
\end{enumerate}

\end{lemma}

Let Per$(\widehat{\Phi})$ = $\cup_{k\geq1}$Fix$(\widehat{\Phi}^k)$, Per$_+(\widehat{\Phi})$ = $\cup_{k\geq1}$Fix$_+(\widehat{\Phi}^k)$ and similarly define Per$_-(\widehat{\Phi})$ and Per$_N(\widehat{\Phi})$. \\
We say that $\phi\in \out$ is rotationless if Fix$_N(\widehat{\Phi})$ = Per$_N(\widehat{\Phi})$ for all $\Phi\in P(\phi)$, and if for each $k\geq1$ the map $\Phi\rightarrow \Phi^k$ induces a bijection between $P(\phi)$ and $P(\phi^k)$. \\
The following two important facts about rotationless automorphisms are taken from \citep{FH-11}. The following fact in particular is heavily used. Whenever we write \textquotedblleft pass to a rotationless power \textquotedblright we intend to use this uniform constant $K$ given by the fact.
\begin{lemma}[\citep{FH-11},Lemma 4.43]
\label{rotationless}
 There exists a $K$ depending only upon the rank of the free group $\F$ such that for every $\phi\in \out$ , $\phi^K$ is rotationless.
\end{lemma} 

\begin{lemma}[\citep{FH-11}]
 If $\phi\in \out$ is rotationless then:
\begin{itemize}
 \item Every periodic conjugacy class of $\phi$ is a fixed conjugacy class.
\item Every free factor system which is periodic under $\phi$ is fixed.
\end{itemize}

\end{lemma}

\subsection{Topological representatives and Train track maps}
\label{sec:5}
Given $\phi\in\out$ a \textit{topological representative} is a homotopy equivalence $f:G\rightarrow G$ such that $\rho: R_r \rightarrow G$ is a marked graph, $f$ takes vertices to vertices and edges to paths and $\overline{\rho}\circ f \circ \rho: R_r \rightarrow R_r$ represents $R_r$. A nontrivial path $\gamma$ in $G$ is a \textit{periodic Nielsen path} if there exists a $k$ such that $f^k_\#(\gamma)=\gamma$; 
the minimal such $k$ is called the period and if $k=1$, we call such a path \textit{Nielsen path}. A periodic Nielsen path is \textit{indivisible} if it cannot be written as a concatenation of two or more nontrivial periodic Nielsen paths. 

Given a subgraph $H\subset G$ let $G\setminus H$ denote the union of edges in $G$ that are not in $H$.

Given a marked graph $G$ and a homotopy equivalence $f:G\rightarrow G$ that takes edges to paths, one can define a new map $Tf$ by setting $Tf(E)$ to be the first edge in the edge path associated to $f(E)$; similarly let $Tf(E_i,E_j) = (Tf(E_i),Tf(E_j))$. So $Tf$ is a map that takes turns to turns. We say that a nondegenerate turn is illegal if for some iterate of $Tf$ the turn becomes degenerate; otherwise the 
 turn is legal. A path is said to be legal if it contains only legal turns and it is $r-legal$ if it is of height $r$ and all its illegal turns are in $G_{r-1}$.
 
 \textbf{Relative train track map.} Given $\phi\in \out$ and a topological representative $f:G\rightarrow G$ with a filtration $G_0\subset G_1\subset \cdot\cdot\cdot\subset G_k$ which is preserved by $f$, we say that $f$ is a train relative train track map if the following conditions are satisfied:
 \begin{enumerate}
  \item $f$ maps r-legal paths to legal r-paths.
  \item If $\gamma$ is a path in $G$ with its endpoints in $H_r$ then $f_\#(\gamma)$ has its end points in $H_r$.
  \item If $E$ is an edge in $H_r$ then $Tf(E)$ is an edge in $H_r$
 \end{enumerate}

 For any topological representative $f:G\rightarrow G$ and exponentially growing stratum $H_r$, let $N(f,r)$ be the number of indivisible Neilsem paths $\rho\subset G$ that intersect the interior of $H_r$. Let $N(f)= \Sigma_r N(f,r)$. Let $N_{min}$ be the minimum value of $N(f)$ that occurs among the topological representatives with $\Gamma=\Gamma_{min}$. We call a relative train track map stable if $\Gamma=\Gamma_{min}$ and $N(f)=N_{min}$.
 The following result is Theorem 5.12 in \citep{BH-92} which assures the existence of a stable relative train track map.
 \begin{lemma}
  Every $\phi\in \out$ has a stable relative train track representative.
 \end{lemma}
 If $\phi\in \out$ is fully irreducible then the above fact implies that there exists a stable train track representative for $\phi$.

 The reason why stable relative train track maps are more useful is due to the following fact 
 \begin{lemma} (Theorem 5.15, \citep{BH-92})
 If $f:G\rightarrow G$ is a stable relative train track representative of $\phi\in \out$, and $H_r$ is an exponentially growing stratum, then there exists at most one indivisible Nielsen path $\rho$ of height $r$. If such a $\rho$ exists, then the illegal turn of $\rho$ is the only illegal turn in $H_r$ and $\rho$ crosses every edge of $H_r$.
  \end{lemma}

 \textbf{Splittings, complete splittings and CT's}. Given relative train track map $f:G\rightarrow G$, splitting of a line, path or a circuit $\gamma$ is a decomposition of $\gamma$ into subpaths $....\gamma_0\gamma_1 .....\gamma_k....  $ such that for all $i\geq 1$ the path $f^i_\#(\gamma) =  .. f^i_\#(\gamma_0)f^i_\#(\gamma_1)...f^i_\#(\gamma_k)...$
 The terms $\gamma_i$ are called the \textit{terms} of the splitting of $\gamma$. 
 
 Given two linear edges $E_1,E_2$ and a root-free closed Nielsen path $\rho$ such that $f_\#(E_i) = E_i.\rho^{p_i}$ then we say that $E_1,E_2$ are said to be in the \textit{same linear family} and any path of the form $E_1\rho^m\overline{E}_2$ for some integer $m$ is called an \textit{exceptional path}. 
 
 \textbf{Complete splittings:} A splitting of a path or circuit $\gamma = \gamma_1\cdot\gamma_2......\cdot \gamma_k$ is called complete splitting if each term $\gamma_i$ falls into one of the following categories:
 \begin{itemize}
  \item $\gamma_i$ is an edge in some irreducible stratum.
  \item $\gamma_i$ is an indivisible Nielsen path.
  \item $\gamma_i$ is an exceptional path.
  \item $\gamma_i$ is a maximal subpath of $\gamma$ in a zero stratum $H_r$ and $\gamma_i$ is taken.
 \end{itemize}

 \textbf{Completely split improved relative train track maps}. A \textit{CT} or a completely split improved relative train track maps are topological representatives with particularly nice properties. But CTs do not exist for all outer automorphisms. Only the rotationless outer automorphisms are guranteed to have a CT representative 
 as has been shown in the following Theorem from \citep{FH-11}(Theorem 4.28).
 \begin{lemma}
  For each rotationless $\phi\in \out$ and each increasing sequence $\mathcal{F}$ of $\phi$-invariant free factor systems, there exists a CT $f:G\rightarrow G$ that is a topological 
  representative for $\phi$ and $f$ realizes $\mathcal{F}$.
 \end{lemma}

 The following properties are used to define a CT in \citep{FH-11}. There are actually nine properties. But we will state only the ones we need. The rest are not directly used here but they are all part of the proof of various propositions and lemmas we will be needing and which we have stated here as facts.
 \begin{enumerate}
  \item \textbf{(Rotationless)} Each principal vertex is fixed by $f$ and each periodic direction at a principal vertex is fixed by $Tf$.
  \item \textbf{(Completely Split)} For each edge $E$ in each irreducible stratum, the path $f(E)$ is completely split. 
  \item \textbf{(vertices)} The endpoints of all indivisible Nielsen paths are vertices. The terminal endpoint of each nonfixed NEG edge is principal.
  \item \textbf{(Periodic edges)} Each periodic edge is fixed.
 \end{enumerate}

CTs have very nice properties. The reader can look them up \citep{FH-11} for a detailed exposition or \citep{HM-09} for a quick reference. We list below only a few of them that is needed for us.
\begin{lemma}(\citep{FH-11}, Lemma 4.11)
 A completely split path or circuit has a unique complete splitting.
\end{lemma}

\begin{lemma}
 If $\sigma$ is a finite path or a circuit with endpoint in vertices, then $f^k_\#(\sigma)$ is completely split for all sufficiently large $k\geq 1$.
\end{lemma}

\begin{lemma}
 Every periodic Nielsen path is fixed. Also, for each EG stratum $H_r$ there exists atmost one indivisible Nielsen path of height $r$, upto reversal of oreintation. 
\end{lemma}

\subsection{Attracting Laminations and their properties under CTs}
\label{sec:6}
For any marked graph $G$, the natural identification $\mathcal{B}\approx \mathcal{B}(G)$ induces a bijection between the closed subsets of $\mathcal{B}$ and the closed subsets of $\mathcal{B}(G)$. A closed 
subset in any of these two cases is called a \textit{lamination}, denoted by $\Lambda$. Given a lamination $\Lambda\subset \mathcal{B}$ we look at the corresponding lamination in $\mathcal{B}(G)$ as the 
realization of $\Lambda$ in $G$. An element $\lambda\in \Lambda$ is called a \textit{leaf} of the lamination.\\
A lamination $\Lambda$ is called an \textit{attracting lamination} for $\phi$ is it is the weak closure of a line $l$ (called the \textit{generic leaf of $\lambda$}) satisfying the following conditions:
\begin{itemize}
 \item $l$ is bi-recurrent leaf of $\Lambda$.
\item $l$ has an \textit{attracting neighborhood} $V$, in the weak topology, with the property that every line in $V$ is weakly attracted to $l$.
\item no lift $\widetilde{l}\in \mathcal{B}$ of $l$ is the axis of a generator of a rank 1 free factor of $\F$ .
\end{itemize}

We know from \citep{BFH-00} that with each $\phi\in \out$ we have a finite set of laminations $\mathcal{L}(\phi)$, called the set of \textit{attracting laminations} of $\phi$, and the set $\mathcal{L}(\phi)$ is 
invariant under the action of $\phi$. When it is nonempty $\phi$ can permute the elements of $\mathcal{L}(\phi)$ if $\phi$ is not rotationless. For rotationless $\phi$ $\mathcal{L}(\phi)$ is a fixed set. Attracting laminations are directly related to EG stratas. The following fact is a result from \citep{BFH-00} section 3.

\textbf{Dual lamination pairs. }
We have already seen that the set of lines carried by a free factor system is a closed set and so, together with the fact that the weak closure of a generic leaf $\lambda$ of an attracting lamination $\Lambda$ is the whole lamination $\Lambda$ tells us that 
$\mathcal{A}_{supp}(\lambda) = \mathcal{A}_{supp}(\Lambda)$. In particular the free factor support of an attracting lamination $\Lambda$ is a single free factor.
Let $\phi\in \out$ be an outer automorphism and $\Lambda^+_\phi$ be an attracting lamination of $\phi$ and $\Lambda^-_\phi$ be an attracting lamination of $\phi^{-1}$. We say that this lamination pair is a \textit{dual lamination pair} if $\mathcal{A}_{supp}(\Lambda^+_\phi) = \mathcal{A}_{supp}(\Lambda^-_\phi)$. 
By Lemma 3.2.4 of \citep{BFH-00} there is bijection between $\mathcal{L}(\phi)$ and $\mathcal{L}(\phi^{-1})$ induced by this duality relation. 
The following fact is Lemma 2.35 in \citep{HM-13c}; it establishes an important property of lamination pairs in terms of inclusion. We will use it in proving duality for the attracting and repelling laminations we produce in Proposition \ref{pingpong}. 
\begin{lemma}
\label{lam_incl}
 If $\Lambda^\pm_i, \Lambda^\pm_j$ are two dual lamination pairs for $\phi\in \out$  then $\Lambda^+_i\subset \Lambda^+_j$ if and only if $\Lambda^-_i\subset \Lambda^-_j$.
\end{lemma}

\subsection{Nonattracting subgroup system } 
\label{sec:7}
The \textit{nonattracting subgroup system} of an attractting lamination contains information about lines and circuits which are not attracted to the lamination. The definition of this subgroup system 
is slighlty complicated. So we will leave it to the reader to look it up in \citep{HM-13c} where it is explored in details. We however list some key properties which we will be using.
\begin{lemma}(\citep{HM-13c}- Lemma 1.5, 1.6)
\label{NAS}
 \begin{enumerate}
  \item The set of lines carried by $\mathcal{A}_{na}(\Lambda^+_\phi)$ is closed in the weak topology.
  \item A conjugacy class $[c]$ is not attracted to $\Lambda^+_\phi$ if and only if it is carried by $\mathcal{A}_{na}(\Lambda^+_\phi)$.
  \item $\mathcal{A}_{na}(\Lambda^+_\phi)$ does not depend on the choice of the CT representing $\phi$. 
  \item  Given $\phi, \phi^{-1} \in \out$ both rotationless elements and a dual lamination pair $\Lambda^\pm_\phi$ we have $\mathcal{A}_{na}(\Lambda^+_\phi)= \mathcal{A}_{na}(\Lambda^-_\phi)$
  \item $\mathcal{A}_{na}(\Lambda^+_\phi)$ is a free factor system if and only if the stratum $H_r$ is not geometric.
  \item $\mathcal{A}_{na}(\Lambda^+_\phi)$ is malnormal.
  \item If $\{\gamma_n\}_{n\in\mathbb{N}}$ is a sequence of lines such that every weak limit of every subsequence of $\{\gamma_n\}$ is carried by $\mathcal{A}_{na}(\Lambda_\phi)$ then $\{\gamma_n\}$ is carried by $\mathcal{A}_{na}(\Lambda_\phi)$ for all sufficiently large $n$

 \end{enumerate}

\end{lemma}

\begin{corollary}
 Let $\phi\in \out$ be a fully irreducible geometric element which is induced by a pseudo-Anosov homeomorphism of the surface $\mathcal{S}$ with one boundary component. Let $[c]$ be a conjugacy class representing $\partial \mathcal{S}$.
 Then \\ $\mathcal{A}_{na}(\Lambda^+_\phi) = [\langle c \rangle]$.
\end{corollary}

\begin{proof}
 The surface homeomorphism leaves $\partial \mathcal{S}$ implies that $[c]$ is $\phi$ periodic. By passing to a rotationless power we may assume that $[c]$ is fixed by $\phi$. $\phi$ being fully irreducible implies $\mathcal{A}_{na}(\Lambda^+_\phi) = [\langle c \rangle]$. 
\end{proof}

\section{Singular lines, Extended boundary and Weak attraction theorem}
\label{sec:8}
In this section we will look at some results from \citep{HM-13c} which analyze and identify the set of lines which are not weakly attracted to an attracting lamination $\Lambda^\pm_\phi$, given some exponentially growing element in $\out$. Most of the results stated here are in terms of rotationless elements as in the original work. However, we note that being weakly attracted 
to a lamination $\Lambda_\phi$ is not dependent on whether the element is rotationless. All facts stated here about rotationless elements also hold for non rotationless elemenst also, unless otherwise mentioned. This has been pointed out in Remark 5.1 in \citep{HM-13c}
The main reason for using rotationless elements is to make use of the train track structure from the CT theory. We will use some of the facts to prove lemmas about non rotationless elements which we will need later on.

Denote the set of lines not attracted to $\Lambda^+_\phi$ by $\mathcal{B}_{na}(\Lambda^+_\phi)$. The non-attracting subgroup system carries partial information about such lines as we can see in Lemma \ref{NAS}. Other obvious lines which are not attracted are the generic leaves of $\Lambda^-_\phi$. There is another class of lines, called singular lines, which we define below, which are not weakly attracted to $\Lambda^+_\phi$.

Define a \textit{singular line} for $\phi$ to be a line $\gamma\in \mathcal{B}$ if there exists a principal lift $\Phi$ of $\phi$ and a lift $\widetilde{\gamma}$ of $\gamma$ such that the endpoints of $\widetilde{\gamma}$ are contained in Fix$_N(\Phi) \subset \partial \F$.\\
The set of all singular lines of $\phi$ is denoted by $\mathcal{B}^{sing}(\phi)$. 
The fact [Lemma 2.1, \citep{HM-13c}] below summarizes this discussion.
\begin{lemma}
 Given a rotationless $\phi\in \out$ and an attracting lamination $\Lambda^+_\phi$, any line $\gamma$ that satisfies one of the following three conditions is in $\mathcal{B}_{na}(\Lambda^+_\phi)$.
\begin{enumerate}
 \item $\gamma$ is carried by $\mathcal{A}_{na}\Lambda^\pm_\phi$
  \item $\gamma$ is a generic leaf of some attracting lamination for $\phi^{-1}$
\item $\gamma$ is in $\mathcal{B}^{sing}(\phi)$.
\end{enumerate}
 
\end{lemma}

But these are not all lines that constitute $\mathcal{B}_{na}(\Lambda^+_\phi)$. A important theorem in [Theorem 2.6, \citep{HM-13c}, stated here as Lemma \ref{NAL}, tells us that there is way to concatenate lines from the three classes we mentioned in the above fact which will also result in lines that are not weakly attracted to $\Lambda^+_\phi$. Fortunately, these are all possible types of lines in 
$\mathcal{B}_{na}(\Lambda^+_\phi)$. A simple explanation of why the concatenation is necessary is, one can construct a line by connecting the base points of two rays, one of which is asymptotic to a singular ray in the forward direction of $\phi$ and the other is asymptotic to a singular ray in the backward direction of $\phi$. This line does not fall into any of the three categories we see in the fact above. 
The concatenation process described in \citep{HM-13c} takes care of such lines. We will not describe the concatenation here, but the reader can look up section 2.2 in \citep{HM-13c}. The following definition is by Handel and Mosher:

\begin{definition}

Let $A \in \mathcal{A}_{na}\Lambda^\pm_\phi$ and $\Phi \in P(\phi)$, we say that $\Phi$ is $A-related$ if Fix$_N(\widehat{\Phi})\cap \partial A \neq \emptyset$. Define the extended boundary of $A$ to be $$\partial^{ext}(A,\phi) = \partial A \cup \bigg( \bigcup_{\Phi}Fix_N(\widehat{\Phi}) \bigg)$$
where the union is taken over all $A$-related $\Phi\in P(\phi)$.
\end{definition}
Let $\mathcal{B}^{ext}(A,\phi)$ denote the set of lines which have end points in $\partial^{ext}(A,\phi)$; this set is independent of the choice of $A$ in its conjugacy class. Define $$\mathcal{B}^{ext}(\Lambda^+_\phi)  = \bigcup_{A\in \mathcal{A}_{na}\Lambda^\pm_\phi} \mathcal{B}^{ext}(A,\phi)$$. We can now state the main result about non-attracted lines.
\begin{lemma}
 \label{NAL}
Suppose $\phi, \psi = \phi^{-1}\in \out$ be rotationless elements and $\Lambda^+_\phi$ is an attracting lamination for $\phi$. Then any line $\gamma$ is in $\mathcal{B}_{na}(\Lambda^+_\phi)$ if and only if one of the following conditions hold:
\begin{enumerate}
 \item $\gamma$ is in $\mathcal{B}^{ext}(\Lambda^+_\phi)$
\item $\gamma$ is in $\mathcal{B}^{sing}(\phi)$
\item $\gamma$ is a generic leaf of some attracting lamination for $\psi$
\end{enumerate}
 
\end{lemma}

It is worth noting that the sets of lines mentioned in Lemma \ref{NAL} are not necessarily pairwise disjoint. But if we have a line $\sigma \in \mathcal{B}_{na}(\Lambda^+_\phi)$ that is bi-recurrent then the situation is much simpler.
In that case $\sigma$ is either a generic leaf of some attracting lamination for $\phi^{-1}$ or $\sigma$ is carried by $\mathcal{A}_{na}\Lambda^\pm_\phi$. This takes us to the weak attraction theorem that we need to prove our result.

\subsection{Weak attraction theorem }
\label{sec:9}
\begin{lemma}[\citep{HM-13c} Corollary 2.17]
\label{WAT}
 Let $\phi\in \out$ be a rotationless and exponentially growing. Let $\Lambda^\pm_\phi$ be a dual lamination pair for $\phi$. Then for any line $\gamma\in\mathcal{B}$ not carried by $\mathcal{A}_{na}(\Lambda^{\pm}_\phi)$ at least one of the following hold:
\begin{enumerate}
 \item $\gamma$ is attracted to $\Lambda^+_\phi$ under iterations of $\phi$.
   \item $\gamma$ is attracted to $\Lambda^-_\phi$ under iterations of $\phi^{-1}$.
\end{enumerate}
Moreover, if $V^+_\phi$ and $V^-_\phi$ are attracting neighborhoods for the laminations $\Lambda^+_\phi$ and $\Lambda^-_\phi$ respectively, there exists an integer $l\geq0$ such that at least one of the following holds:
\begin{itemize}
 \item $\gamma\in V^-_\phi$.
\item $\phi^l(\gamma)\in V^+_\phi$
\item $\gamma$ is carried by $\mathcal{A}_{na}(\Lambda^{\pm}_\phi)$. 
\end{itemize}

\end{lemma}

\begin{corollary}
\label{WATgeo}
 Let $\phi\in \out$ be exponentially growing and $\Lambda^{\pm}_\phi$ be geometric dual lamination pair for $\phi$ such that $\phi$ fixes $\Lambda^+_{\phi}$ and $\phi^{-1}$ fixes $\Lambda^{-}_\phi $with attracting neighborhoods $V^{\pm}_\phi$. 
 Then there exists some integer $l$ such that for any line $\gamma$ in $\mathcal{B}$ one of the following occurs:
\begin{itemize}
 \item $\gamma\in V^-_\phi$.
\item $\phi^l(\gamma)\in V^+_\phi $.
\item$\gamma$ is carried by $\mathcal{A}_{na}(\Lambda^{\pm}_\phi)$ 
\end{itemize}

\end{corollary}
\begin{proof}
 Let $K$ be a positive integer such that $\phi^K$ is rotationless. Then by definition $\mathcal{A}_{na}(\Lambda^{\pm}_\phi)=\mathcal{A}_{na}(\Lambda^{\pm}_{\phi^K})$. Also $\phi$ fixes $\Lambda^+_{\phi}$  implies 
$\Lambda^{+}_\phi=\Lambda^+_{\phi^K}$ and the attracting neighborhoods $V^+_\phi$ and $V^+_{\phi^K}$ can also be chosen to be the same weak neighborhoods.
Then by Lemma \ref{WAT} we know that there exists some positive integer $m$ such that the conclusions of the Weak attraction theorem hold for $\phi^K$. Let $l:=mK$. This gives us the conclusions of the corollary.
Before we end we note that by definition of an attracting neighborhood $\phi(V^+_\phi)\subset V^+_\phi$ which implies that if $\phi^l(\gamma)\in V^+_\phi$, then $\phi^t(\gamma)\in V^+_\phi$ for all $t\geq l$.
\end{proof}

\begin{lemma}
 Suppose $\phi,\psi\in \out$ are two exponentially growing automorphisms with attracting laminations $\Lambda^+_\phi$ and $\Lambda^+_\psi$, respectively. If a generic leaf $\lambda\in\Lambda^+_\phi$ is in $\mathcal{B}_{na}(\Lambda^+_\psi)$ then the whole lamination $\Lambda^+_\phi\subset \mathcal{B}_{na}(\Lambda^+_\psi)$. 
 
\end{lemma}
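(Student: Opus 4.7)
The plan is to exploit two facts already established in the preliminaries: (i) attraction to an attracting lamination is an open condition in the weak topology (the paper's Fact that is a restatement of Corollary 4.2.4 and Lemma 4.2.2 of \citep{BFH-00}), and (ii) the attracting lamination $\Lambda^+_\phi$ is by definition the weak closure of any one of its generic leaves. Combining these two observations makes the conclusion essentially a topological triviality.

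First I would observe that, since being attracted to $\Lambda^+_\psi$ under iteration by $\psi$ is an open condition on $\mathcal{B}$, its complement $\mathcal{B}_{na}(\Lambda^+_\psi)$ is a closed subset of $\mathcal{B}$ in the weak topology. Next, since $\lambda$ is a generic leaf of $\Lambda^+_\phi$, by definition of an attracting lamination the lamination $\Lambda^+_\phi$ equals the weak closure of the one-point set $\{\lambda\}$ in $\mathcal{B}$. Finally, since $\lambda \in \mathcal{B}_{na}(\Lambda^+_\psi)$ and $\mathcal{B}_{na}(\Lambda^+_\psi)$ is closed, the weak closure of $\{\lambda\}$ lies inside $\mathcal{B}_{na}(\Lambda^+_\psi)$, which is precisely the statement $\Lambda^+_\phi \subseteq \mathcal{B}_{na}(\Lambda^+_\psi)$.

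There is essentially no obstacle in this argument, since both ingredients are already in the toolbox of the paper. The only point requiring even a moment's care is to make sure we are applying openness to iteration by $\psi$ (not by $\phi$, which is the automorphism whose lamination we are taking $\lambda$ from); this is why the hypothesis is phrased in terms of a single generic leaf of $\Lambda^+_\phi$ rather than, say, a single line in general position. Once this distinction is in place, closedness of $\mathcal{B}_{na}(\Lambda^+_\psi)$ together with the definition of generic leaf does all the work, and no further input from CT theory or from the structure of the nonattracting subgroup system is needed.
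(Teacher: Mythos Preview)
Your argument is correct and is in fact more direct than the one the paper gives. The paper does not invoke the openness-of-attraction fact here; instead it uses the bi-recurrence of the generic leaf $\lambda$ together with the classification of bi-recurrent lines in $\mathcal{B}_{na}(\Lambda^+_\psi)$, splitting into two cases: either $\lambda$ is carried by $\mathcal{A}_{na}(\Lambda^+_\psi)$, in which case closedness of the set of lines carried by that subgroup system (Fact~\ref{NAS}, item~4) forces all of $\Lambda^+_\phi$ to be carried there; or $\lambda$ is a generic leaf of some $\Lambda^-_\psi\in\mathcal{L}(\psi^{-1})$, in which case $\overline{\lambda}=\Lambda^+_\phi=\Lambda^-_\psi$ and one concludes via Fact~3.1. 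Your route bypasses this dichotomy entirely by observing that $\mathcal{B}_{na}(\Lambda^+_\psi)$ is closed outright, so the weak closure of $\lambda$ stays inside it. The advantage of the paper's argument is that it yields strictly more structural information (namely, \emph{which} of the two mechanisms is responsible for non-attraction), which could be useful downstream; the advantage of yours is economy, since it needs nothing beyond the definition of a generic leaf and a single topological fact already recorded in the preliminaries.
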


\begin{proof}
 Recall that a generic leaf is bi-recurrent. Hence, $\lambda\in \mathcal{B}_{na}(\Lambda^+_\psi)$ implies that $\lambda$ is either carried by $\mathcal{A}_{na}$ or it is a generic leaf of some element of $\mathcal{L}(\psi^{-1})$. 
 First assume that $\lambda$ is carried by $\mathcal{A}_{na}$. Then using Lemma \ref{NAS} item 4, we can conclude that $\Lambda^+_\phi$ is carried by $\mathcal{A}_{na}(\Lambda^+_\psi)$.
 
 Alternatively, if $\lambda$ is a generic leaf of some element $\Lambda^-_\psi\in \mathcal{L}(\psi^{-1})$, then the weak closure $\overline{\lambda} = \Lambda^+_\phi = \Lambda^-_\phi$ and we know $\Lambda^-_\psi$ does not get attracted to $\Lambda^+_\psi$. 
 Hence, $\Lambda^+_\phi\subset\mathcal{B}_{na}(\Lambda^+_\psi)$.
\end{proof}

\section{Pingpong argument for exponential growth}
\label{sec:10}
\begin{lemma}[\citep{BFH-00} Section 2.3]
\label{doublesharp}
 If $f:G\longrightarrow G$ is a train track map for an irreducible $\phi\in$Out($F_n$) and $\alpha$ is a path in some leaf $\lambda$ of $G$ such that $\alpha=\alpha_1\alpha_2\alpha_3$ is a decomposition into subpaths such that $|\alpha_1|,|\alpha_3|\geq2C $
where C is the bounded cancellation constant for the map $f$, then $f^k_\#(\alpha_2)\subset f^k_{\#\#}(\alpha)$ for all $k\geq0$.
\end{lemma}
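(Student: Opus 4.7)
The plan is to combine two classical ingredients: legality of leaves of the attracting lamination under a train track map, and the bounded cancellation lemma already recorded in Proposition 2.1 of the excerpt.

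First, I would observe that since $f\colon G\to G$ is a train track map for the irreducible $\phi$ and $\lambda$ is a leaf of the attracting lamination of $\phi$, the line $\lambda$ is legal: it crosses no illegal turn. Consequently the finite subpath $\alpha\subset\lambda$ is legal, and so are its three pieces $\alpha_1,\alpha_2,\alpha_3$. Because $f$ sends legal paths to legal paths without backtracking at internal turns, for every $k\geq 0$ the concatenation
$$f^k(\alpha) \;=\; f^k(\alpha_1)\cdot f^k(\alpha_2)\cdot f^k(\alpha_3)$$
involves no cancellation at its two internal junctions; in particular $f^k_\#(\alpha)=f^k(\alpha)$ and $f^k_\#(\alpha_2)=f^k(\alpha_2)$ appear as naturally identified subpaths.

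Next, I would interpret $f^k_{\#\#}(\alpha)$ following the convention of \citep{BFH-00}: it is the subpath of $f^k_\#(\alpha)$ obtained by discarding up to $C$ from each end (where $C$ is the bounded cancellation constant of $f$), namely the portion of the image guaranteed to persist whenever $\alpha$ is extended to a longer path. Under this reading, the desired inclusion $f^k_\#(\alpha_2)\subset f^k_{\#\#}(\alpha)$ reduces to checking that the left-hand $C$-collar removed from $f^k_\#(\alpha)$ lies entirely inside $f^k(\alpha_1)$, and symmetrically for the right end. Since $f$ is an irreducible train track map, each edge of $G$ is sent to a legal path of positive length, so edge length of a legal path is non-decreasing under $f$; in particular $|f^k(\alpha_i)|\geq|\alpha_i|\geq 2C$ for $i\in\{1,3\}$. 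Hence both $C$-collars lie strictly inside $f^k(\alpha_1)$ and $f^k(\alpha_3)$ respectively, the middle block $f^k(\alpha_2)$ is untouched, and the inclusion follows.

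The only real obstacle is notational: pinning down the precise convention for $f^k_{\#\#}$ from \citep{BFH-00} and confirming that the $C$-collar interpretation is indeed the intended one. Once this is fixed, the legality observation combined with the single length estimate $|f^k(\alpha_i)|\geq 2C$ closes the argument with no serious computation required.
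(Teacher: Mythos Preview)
Your argument has a genuine gap, and it is precisely the point you flag as ``only notational.'' The quantity $f^k_{\#\#}(\alpha)$ is \emph{not} $f^k_\#(\alpha)$ with a $C$-collar removed, where $C=\mathrm{BCC}(f)$. By definition it is the path common to $f^k_\#(\gamma)$ as $\gamma$ ranges over all extensions of $\alpha$; bounded cancellation then guarantees only that it contains $f^k_\#(\alpha)$ minus $\mathrm{BCC}(f^k)$ from each end. Since $\mathrm{BCC}(f^k)$ grows with $k$ (roughly like $\lambda^k$ for an irreducible train track map with stretch factor $\lambda$), your estimate $|f^k(\alpha_i)|\ge|\alpha_i|\ge 2C$ does not control the collar you actually need to remove, and the inclusion $f^k_\#(\alpha_2)\subset f^k_{\#\#}(\alpha)$ does not follow from your length bound.

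The paper's proof avoids this by never comparing $|f^k(\alpha_i)|$ to $\mathrm{BCC}(f^k)$. Instead it works directly with the intersection definition: for an arbitrary extension $\gamma\supset\alpha$, it invokes the splitting lemma of \citep{BFH-00}, Section~2.3, to conclude that $\gamma$ splits at the endpoints of $\alpha_2$, i.e.\ $f^k_\#(\gamma)=f^k_\#(\gamma_1)\,f^k_\#(\alpha_2)\,f^k_\#(\gamma_3)$ for \emph{every} $k$. That splitting lemma is proved by induction on $k$, using $\mathrm{BCC}(f)$ once per step together with the fact that the legal buffers $\alpha_1,\alpha_3$ (and their images) stay long enough to absorb a single application's worth of cancellation. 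Once the splitting is established for all $\gamma$, the inclusion $f^k_\#(\alpha_2)\subset f^k_{\#\#}(\alpha)$ is immediate from the definition. Your legality observations are correct and are exactly what drives that inductive argument, but they must be fed into an induction rather than a one-shot collar estimate.
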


\begin{proof}
 Let $\alpha$ be any path with a decomposition $\alpha=\alpha_1\alpha_2\alpha_3$. Take lifts to universal cover of $G$. If $\tilde{\gamma}$ is a path in $\tilde{G}$ that contains $\tilde{\alpha}$, then decompose $\tilde{\gamma}=\tilde{\gamma_1}\tilde{\alpha_2}\tilde{\gamma_3}$
such that $\tilde{\alpha_1}$ is the terminal subpath of $\tilde{\gamma_1}$ and $\tilde{\alpha_3}$ is the initial subpath of $\tilde{\gamma_3}$. 
 Following the proof of \citep{BFH-00} if $K=2C$ then $\tilde{\gamma}$ can be split at the endpoints of $\tilde{\alpha_2}$. Thus, $\tilde{f^k_\#}(\tilde{\gamma})=\tilde{f^k_\#}(\tilde{\gamma_1})\tilde{f^k_\#}(\tilde{\alpha_2})\tilde{f^k_\#}(\tilde{\gamma_3})$.
The result now follows from the definition of $f^k_{\#\#}(\alpha)$. 

\end{proof}
\begin{lemma}[\citep{HM-13c} Lemma 1.1]
 \label{expgrowth}
Let $f:G\rightarrow G$ be a homotopy equivalence representing $\phi\in \out$ such that there exists a finite path $\beta\subset G$ having the property that 
$f_{\#\#}(\beta)$ contains three disjoint copies of $\beta$. Then $\phi$ is exponentially growing and there exists a lamination $\Lambda\in\mathcal{L}(\phi)$ and a generic leaf $\lambda$ of $\Lambda\in\mathcal{L}(\phi)$ 
such that $\Lambda$ is $\phi$-invariant and $\phi$ fixes $\lambda$ preserving orientation, each generic leaf contains $f^i_{\#\#}(\beta)$ as a subpath 
for all $i\geq0$ and $N(G,\beta)$ is an attracting neighborhood for $\Lambda$.
\end{lemma}

We adapt the following notation for the statement of the next proposition: 
Let $\epsilon \in \{-,+\}$ and $i\in \{0,1\}$. Here $i$ will be used to represent $\psi$(if $i=0$) or $\phi$(if $i=1$). Together 
the tuple $\mu_i := (i,\epsilon) \in \{0,1\}\times \{-,+\}$ will represent $\psi$(if $\mu_0=(0,+)$), $\psi^{-1}$ (if $\mu_0=(0,-)$) and so on. In this notation we write $\Lambda_{0}^{\epsilon} := \Lambda^{\epsilon}_{\psi}$ and so on.\\
We also write notations like $-\epsilon$ where it means $--=+$ and $-+=-$ depending on value of epsilon.\\
\textbf{Standing assumptions for the rest of this section:} $\phi,\psi$ are exponentially growing elements of $\out$ such that the following conditions are satisfied:
\begin{enumerate}
  \item $\phi, \psi$ are not powers of one another.
 \item There exists dual lamination pairs $\Lambda^\pm_{\psi}$ and $\Lambda^\pm_{\phi}$ such that $\Lambda^\pm_{\psi}$ is attracted to $\Lambda^\epsilon_{\phi}$ under iterates of $\phi^\epsilon$ and $\Lambda^\pm_\phi$ is attracted to $\Lambda^\epsilon_\psi$ under 
 iterates of $\psi^\epsilon$.
 \item $\psi^\epsilon$ fixes $\Lambda^\epsilon_\psi$ and $\phi^\epsilon$ fixes $\Lambda^\epsilon_\phi$.
 \item Both $\Lambda^\pm_\psi$ and $\Lambda^\pm_\phi$ are non-geometric or every lamination pair of every element of $\langle \psi, \phi \rangle$ is geometric.
 
 \end{enumerate}

\begin{remark}
 Some remarks regarding the set of hypothesis.
 \begin{itemize}
  \item hypothesis 2 and 3 are needed to play the ping-pong game.
  \item hypothesis 4 is needed to prove that the attracting and repelling laminations produced out of ping-pong are dual. This, as we will see later in Proposition \ref{duality} , is not required if 
$\phi, \psi$ are fully irreducible.
   
 \end{itemize}

\end{remark}

The lemma that has been proven by Handel and Mosher in Proposition 1.3 in \citep{HM-13c} is a special case of the following proposition. What they have shown (with slightly weaker conditions than hypothesis 2 above) is that the lemma is true for $k=1$ and only under positive 
powers of $\psi$ and $\phi$. Strengthening that one part of the hypothesis enables us to extend their result to both positive and negative exponents and also for reduced words with arbitrary $k$ (see statement of \ref{pingpong} for description of $k$). They also have the assumption that $\phi,\psi$ are both 
rotationless, which they later on discovered, is not necessary; one can get away with the hypothesis 3 above. The main technique, however, is same.

\textbf{The Handel-Mosher pingpong argument:}

\begin{proposition}
\label{pingpong}
 Let $\psi,\phi$ be exponentially growing elements of $\out$, that satisfy the hypothesis mentioned above.

Then there exists some integer $M> 0$ and attracting neighborhoods $V^\pm_\phi$ , $V^\pm_\psi$ of $\Lambda^\pm_\phi$ and $\Lambda^\pm_\psi$ ,respectively, such that for every pair of finite sequences $n_i\geq M$ and $m_i\geq M$ if

 $$\xi= \psi^{\epsilon_1 m_1}\phi^{\epsilon_1 ' n_1}........\psi^{\epsilon_k m_k}\phi^{\epsilon_k ' n_k}$$ 
$(k\geq 1)$ is a cyclically reduced word then $w$ will be exponentially-growing and have a lamination pair $\Lambda^\pm_\xi$ satisfying the following properties:
\begin{enumerate}
  \item Every conjugacy class carried by $\mathcal{A}_{na}(\Lambda^\pm_\xi)$ is carried by both $\mathcal{A}_{na}(\Lambda^\pm_\phi)$ and $\mathcal{A}_{na}(\Lambda^\pm_\psi)$
  \item $\psi^{m_i}(V^\pm_\phi) \subset V^+_\psi$ and $\psi^{-m_i}(V^\pm_\phi) \subset V^-_\psi$ .
  \item $\phi^{n_j}(V^\pm_\psi) \subset V^-_\phi$ and $\phi^{-n_j}(V^\pm_\psi) \subset V^-_\phi$ .
  \item $V^+_\xi \colon= V^{\epsilon_1}_\psi$ is an attracting neighborhood of $\Lambda^+_\xi$ 
  \item $V^-_\xi \colon= V^{-\epsilon_k '}_\phi$ is an attracting neighborhood of $\Lambda^-_\xi$ 
  \item (uniformity) Suppose $U^{\epsilon_1}_\psi$ is an attracting neighborhood of $\Lambda^{\epsilon_1}_\psi$ and $\lambda^+_\xi$ is a generic leaf of $\Lambda^+_\xi$. Then $\lambda^+_\xi\in U^{\epsilon_1}_\psi$ for 
  sufficiently large $M$.
  \item (uniformity) Suppose $U^{\epsilon_k'}_\phi$ is an attracting neighborhood of $\Lambda^{\epsilon_k'}_\phi$ and $\lambda^-_\xi$ is a generic leaf of $\Lambda^-_\xi$. Then $\lambda^-_\xi\in U^{\epsilon_k'}_\phi$ for 
  sufficiently large $M$.
\end{enumerate}

\end{proposition}

\begin{proof}

Let $g_{\mu_i}:G_{\mu_i}\longrightarrow G_{\mu_i}$ be stable relative train train-trak maps and \\ $u^{\mu_i}_{\mu_j}:G_{\mu_i}\longrightarrow G_{\mu_j}$ be the homotopy equivalence between the graphs which preserve the markings, where $i\neq j$.

Let $C_1> 2\text{BCC}\{g_{\mu_i}|i \in\{0,1\}\}$. Let $C_2> \text{BCC}\{u^{\mu_i}_{\mu_j}| i,j\in\{0,1\},i\neq j\}$. Let
 $C\geq C_1,C_2$.

Let $\lambda_i^{\epsilon}$ be generic leaves of laminations $\Lambda_i^{\epsilon}$.
\paragraph{Step 1:}
Using the fact that $\Lambda_1^{\epsilon}$ is weakly attracted to $\Lambda_0^{\epsilon '}$, under the action if $\psi^{\epsilon '}$, choose a finite subpath $\alpha_1^{\epsilon}\subset\lambda^{\epsilon}_1$ such that 
\begin{itemize}
 \item $(u_{\mu_0}^{\mu_1})_{\#}(\alpha_1^{\epsilon})\rightarrow \lambda^{\epsilon '}_0$ weakly, where $\mu_0=(0,\epsilon ')$ and $\mu_1=(1,\epsilon)$.
\item $\alpha_1^{\epsilon}$ can be broken into three segments: initial segment of $C$ edges, followed by a subpath $\alpha^{\epsilon}$ followed by a terminal segment with $C$ edges.
\end{itemize}

\paragraph{Step 2:}
 Now using the fact $\Lambda^{\epsilon}_0\rightarrow \Lambda^{\epsilon '}_1$ weakly, under iterations of $\phi^{\epsilon '}$, we can find positive integers $p^{\epsilon}_{\mu_1}$ (there are four choices here that will yield four integers) such that 
$\alpha^{\epsilon'}_1 \subset (g^{p^{\epsilon}_{\mu_1}}_{\mu_1} u_{\mu_1}^{\mu_0})_\#(\lambda^{\epsilon}_0)$ , where $\mu_0=(0,\epsilon) , \mu_1=(1,\epsilon')$. \\
Let $C_3$ be greater than BCC$\{g^{p^{\epsilon}_{\mu_1}}_{\mu_1} u_{\mu_1}^{\mu_0}\} $ (four maps for four integers $p^{\epsilon}_{\mu_1}$) .\\

\paragraph{Step 3:}
Next, let $\beta^{\epsilon}_0\subset \lambda^{\epsilon}_0$ be a finite subpath such that $(g^{p^{\epsilon}_{\mu_1}}_{\mu_1})_\#(\beta^{\epsilon}_0)$ contains $\alpha^{\epsilon '}_1$ protected by $C_3$ edges in both sides, where $\mu_0=(0,\epsilon)$ and $\mu_1=(1,\epsilon ')$.
Also, by increasing $\beta^{\epsilon}_0$ if necessary, we can assume that $V^{\epsilon}_\psi = N(G_{\mu_0},\beta^{\epsilon}_0)$ is an attracting neighborhood of $\Lambda^{\epsilon}_0$ .

Let $\sigma$ be any path containing $\beta^{\epsilon}_0$. Then $(g^{p^{\epsilon}_{\mu_1}}_{\mu_1} u_{\mu_1}^{\mu_0})_\#(\sigma)\supset \alpha^{\epsilon}_0$. Thus by using Lemma \ref{doublesharp} we get that  $(g^{p^{\epsilon}_{\mu_1}+t}_{\mu_1} u_{\mu_0}^{\mu_1})_\#(\sigma)=(g^t_{\mu_1})_\#((g^{p^{\epsilon}_{\mu_1}}_{\mu_1} u_{\mu_1}^{\mu_0})_\#(\sigma))$ contains $(g^t_{\mu_1})_\#(\alpha^{\epsilon})$ for all $t\geq 0$ .\\
Thus we have $(g^{p^{\epsilon}_{\mu_1}+t}_{\mu_1} u_{\mu_1}^{\mu_0})_{\#\#}(\beta^{\epsilon}_0)\supset (g^t_{\mu_1})_\#(\alpha^{\epsilon})$ for all $t\geq0$.

\paragraph{Step 4:}
Next step is reverse the roles of $\phi$ and $\psi$ to obtain positive integers $q^{\epsilon'}_{\mu_1}$ and paths $\gamma^{\epsilon'}_1\subset\lambda^{\epsilon'}_{\mu_1}$ such that $(g^{q^{\epsilon}_{\mu_0}+t}_{\mu_0} u_{\mu_0}^{\mu_1})_{\#\#}(\gamma^{\epsilon'}_1)\supset (g^t_{\mu_0})_\#(\beta^{\epsilon}_0)$ for all $t\geq0$ 
, where $\mu_0=(0,\epsilon), \mu_1= (1,\epsilon')$  

\paragraph{Step 5:}
Finally, let $k$ be such that $(g^k_{\mu_1})_{\#}(\alpha^{\epsilon})$ contains three disjoint copies of $\gamma^{\epsilon}_1$ and that $(g^{k}_{\mu_0})_{\#}(\beta^{\epsilon}_0)$ contains three 
disjoint copies of $\beta^{\epsilon}_0$ for $\epsilon = 0,1$. Let $p\geq$ max $\{ p^{\epsilon}_{\mu_1} \}$  + $k$ and $q\geq$ max $\{q^{\epsilon}_{\mu_0} \}$  + $k$.

Let $m_i\geq q$ and $n_i\geq p$.

The map $f_\xi = g^{m_1}_{(0,\epsilon_1)}u^{(1,\epsilon_1')}_{(0,\epsilon_1)}g^{n_1}_{(1,\epsilon_1')}u^{(0,\epsilon_2)}_{(1,\epsilon_1')}.........g^{n_k}_{(1,\epsilon_k ')}u^{(0,\epsilon_1)}_{(1,\epsilon_k ')}:G_{(0,\epsilon_1)}\rightarrow G_{(0,\epsilon_1)}$ is a topological representative of $\xi$.
With the choices we have made, \\ $g^{n_k}_{(1,\epsilon_k')}u^{(0,\epsilon_1)}_{(1,\epsilon_k')})_{\#\#}(\beta^{\epsilon_1}_0)$ contains three disjoint copies of $\gamma_1^{\epsilon_k'}$ and so \\ $(g^{m_k}_{(0,\epsilon_k)}u^{(1,\epsilon_k')}_{(0,\epsilon_k)}g^{n_k}_{(1,\epsilon_k')}u^{(0,\epsilon_1)}_{(1,\epsilon_k')})_{\#\#}(\beta^{\epsilon_1}_0) $ will contain three disjoint copies of $\beta_0^{\epsilon_k}$. 
Continuing in this fashion in the end we get that $(f_\xi)_{\#\#}(\beta_0^{\epsilon_1})$ contains three disjoint copies of $\beta_0^{\epsilon_1}$. Thus by Lemma \ref{expgrowth} $\xi$ is an exponentially growing element of $\out$ with an attracting lamination $\Lambda^+_\xi$ which has 
$V^+_\xi = \\ N(G_{\mu_0}, \beta^{\epsilon_1}_0) = V^{\epsilon_1}_\psi$ as an attracting neighborhood. \\

Similarly, if we take inverse of $\xi$ and interchange the roles played by $\psi, \phi$ with $\phi^{-1},\psi^{-1}$, we can produce an attracting lamination $\Lambda^-_{\xi}$ for $\xi^{-1}$ with an attracting neighborhood $V^-_\xi = N(G_{(1,-\epsilon_k')},\gamma_1^{-\epsilon_k'})=V^{-\epsilon_k'}_\phi$ . 
 which proves  property (4) and (5) of the proposition.
We shall later show that $\Lambda^+_\xi$ and $\Lambda^-_\xi$ form a dual-lamination pair.

Hence,every reduced word of the group $\langle\phi^n,\psi^m\rangle$ will be exponentially growing if $n\geq p,m\geq q$. Let $M\geq p,q$. 

Now, we prove the conclusion related to non-attracting subgroup.
By corollary \ref{WATgeo}  there exists $l$ so that if $\tau$ is neither an element of $V^{-\epsilon_k'}_\phi = V^-_\xi$ nor is it carried by $\mathcal{A}_{na}\Lambda^\pm_\phi$ then $\phi^{\epsilon_k't}_\#(\tau)\in V^{\epsilon_k'}_\phi$ for all $t\geq l$ . Increase $M$
 if necessary so that $M>l$. Under this assumption, 

$\xi_\#(\tau)\in \psi^{\epsilon_1m_1}(V^{\epsilon_1'}_\phi)\subset V^+_\xi$. So $\tau$ is weakly attracted to $\Lambda^+_\xi$. Hence we conclude that if $\tau\notin V^-_\xi$ and not attracted to $\Lambda_\xi^+$, then $\tau$ is  carried by $\mathcal{A}_{na}\Lambda^\pm_\phi$ .

Similarly, if $\tau$ is not in $V^+_\xi$ and not attracted to $\Lambda^-_\xi$ then $\tau$ is carried by $\mathcal{A}_{na}\Lambda^\pm_\psi$.

Next, suppose that $\tau$ is a line that is not attracted to any of $\Lambda^+_\xi, \Lambda^-_\xi$. Then $\tau$ must be disjoint from $V^+_\xi, V^-_\xi$. So, is carried by both $\mathcal{A}_{na}\Lambda^\pm_\psi$ and $\mathcal{A}_{na}\Lambda^\pm_\phi$.
Restricting our attention to periodic line, we can say that every conjugacy class that is carried by both $\mathcal{A}_{na}\Lambda^+_\xi$ and $\mathcal{A}_{na}\Lambda^-_\xi$ is carried by both $\mathcal{A}_{na}\Lambda^\pm_\psi$ and $\mathcal{A}_{na}\Lambda^\pm_\phi$. 
That the two non-attracting subgroup systems are mutually malnormal gives us the first conclusion.

The proof in  \citep{HM-13c} that $\Lambda^-_\xi$ and $\Lambda^+_\xi$ are dual lamination pairs will carry over in this situation and so $\mathcal{A}_{na}\Lambda^+_\xi=\mathcal{A}_{na}\Lambda^-_\xi$ .

Thus we have the proof of the first conclusion of our proposition.
\end{proof}

The following are the main ingredients that will be used in the to show applications of the main theorem of this paper:
\begin{proposition}
 \label{duality}
 If we assume that $\psi,\phi$ are fully-irreducible, then we do not need the second and third  bulleted item in the hypothesis for the ping-pong proposition.
\end{proposition}

\begin{remark} In the case of fully-irreducible, situation is much simpler
 \begin{enumerate}
  \item Hypothesis 1 implies hypothesis 2.
  \item Hypothesis 3 is obvious since the attracting and repelling lamination pairs for fully irreducible elements are unique.
  \item Hypothesis 4 is needed to prove that the laminations produced from the ping-pong argument are dual. We will see a direct proof in the line of Proposition 1.3 \citep{HM-13c} if the elements are fully irreducible, without using hypothesis 3 needed for propostion \ref{pingpong}
 \end{enumerate}

\end{remark}

\begin{proof}
 We will show that the laminations produced from ping-pong argument are dual. As in the proposition assume $$\xi= \psi^{\epsilon_1 m_1}\phi^{\epsilon_1 ' n_1}........\psi^{\epsilon_k m_k}\phi^{\epsilon_k ' n_k} $$
 Suppose the laminations $\Lambda^+_\xi$ and $\Lambda^-_\xi$ produced using the ping-pong type argument are not dual. Index all the dual lamination pairs of $\xi$ as $\{\Lambda^\pm_{i}\}_{i\in I}$ and assume that 
 \[\Lambda^+_\xi=\Lambda^+_i \,\, \mathrm{ and } \,\, \,  \Lambda^-_{\xi} = \Lambda^-_j \, \textrm{ for some } i\neq j  \]
 \textbf{Case 1: } $\Lambda^+_i\nsubseteq \Lambda^+_j$. This implies that a generic leaf $\lambda$ of $\Lambda^+_j$ is not attracted to $\Lambda^+_i=\Lambda^+_\xi$ under iteration by $\xi$.
 Also, $\lambda$ is not attracted to $\Lambda^-_j = \Lambda^-_\xi$ under iteration by $\xi^{-1}$. In particular, $\lambda\notin V^-_\xi$. By the discussion at the end of the ping-pong argument, this implies that $\lambda$ is carried by $\mathcal{A}_{na}\Lambda^\pm_\phi$. 
 But $\phi$ being fully-irreducible, $\mathcal{A}_{na}\Lambda^\pm_\phi$ is either trivial, which gives us that $\lambda$ does not exist(contradiction), or $\mathcal{A}_{na}\Lambda^\pm_\phi= [c]$ , which implies that $\lambda$ is a circuit. The later is impossible since a generic leaf cannot be a circuit (Lemma 3.1.16, \citep{BFH-00}).\\
 \textbf{Case 2: } $\Lambda^+_i\subset \Lambda^+_j$. By Lemma \ref{lam_incl} this means $\Lambda^-_i\subset \Lambda^-_j$. This implies that some generic leaf $\lambda$ of $\Lambda^-_i$ is not attracted to $\Lambda^-_j$ under iteration by $\xi^{-1}$ (since proper inclusion implies there is a generic leaf of $\Lambda_i$ whose height is less than the height of the EG stratum corresponding to $\Lambda_j$). 
 Also, $\lambda$ is not attracted to $\Lambda^+_i = \Lambda^+_\xi$ under iteration by $\xi$. In particular, $\lambda\notin V^+_\xi$.
 By discussion at the end of  the ping-pong proposition, this implies that $\lambda$ is carried by $\mathcal{A}_{na}\Lambda^\pm_\psi$. The same arguments as in case 1 works and we get a contradiction.
 
\end{proof}

\begin{corollary}
\label{hyp1}
 If in proposition \ref{pingpong} if we drop bulleted items 2, 3 in the hypothesis and instead assume that $\psi,\phi$ are fully-irreducible outer automorphisms such that $\phi$ is geometric and $\psi$ is hyperbolic(or vice versa), then the 
 resulting laminations $\Lambda^+_\xi$ and $\Lambda^-_\xi$ produced by the ping-pong argument will be dual. Moreover, $\mathcal{A}_{na}\Lambda^\pm_\xi$ will be trivial if $\xi$ is not conjugate to a power of $\phi$
\end{corollary}
\begin{proof}
 The result follows from Proposition \ref{duality} and the conclusion 1 from Proposition \ref{pingpong}
\end{proof}

\begin{corollary}
\label{hyp2}
 If in proposition \ref{pingpong} if we drop bulleted items 2, 3 in the hypothesis and instead assume that $\psi$ and $\phi$ are fully-irreducible and geometric and fix the same conjugacy class, then the resulting laminations $\Lambda^+_\xi$ and $\Lambda^-_\xi$ produced by the ping-pong argument will be dual
  and $\Lambda^\pm_\xi$ will be geometric and $\mathcal{A}_{na}(\Lambda^\pm_\xi)$ will be equal to $\mathcal{A}_{na}(\Lambda^\pm_\phi)$. If they don't fix the same conjugacy class, then $\mathcal{A}_{na}\Lambda^\pm_\xi$ is trivial when $\xi$ is not conjugate to a power of $\phi$ or $\psi$
\end{corollary}

\begin{proof}
 When both are geometric and fix the same conjugacy class they arise from pseudo-Anosov homeomorphism of the same surface with connected boundary and the conjugacy class corresponding to the boundary, $[c]$ is equal to $\mathcal{A}_{na}\Lambda^\pm_\phi$ and $\mathcal{A}_{na}\Lambda^\pm_\psi$.
 So, every reduced word $\xi$ in $\phi$ and $\psi$ will fix $[c]$. We get the conclusion about duality of $\Lambda^+_\xi$ and $\Lambda^-_\xi$ by using proposition \ref{duality} and conclusion 1 of proposition \ref{pingpong} tells us that $\mathcal{A}_{na}\Lambda^\pm_\xi = [c]$. \\
If they are both geometric but they do not fix the same conjugacy class $\mathcal{A}_{na}(\Lambda^\pm_\phi)$ and $\mathcal{A}_{na}(\Lambda^\pm_\psi)$ are conjugacy classes of infinite cyclic subgroups which have generators which are not powers of each other.
Proposition \ref{duality} tells us that the laminations $\Lambda^+_\xi$ and $\Lambda^-_\xi$ are dual. Then using the conclusion(1) from pingpong Lemma \ref{pingpong} we can conclude that $\mathcal{A}_{na}(\Lambda^\pm_\xi)$ does not carry any conjugacy classes and hence is trivial when $\xi$ is not conjugate to some power of $\psi$ or $\phi$.
\end{proof}




\section{Proof of main theorem}
\label{sec:11}
We begin this section by introducing the concept of Stallings graph associated to a free factor, which will contain the information about lines which are carried by the free factor. Stallings introduced the concept of
folding graphs in his seminal work in \citep{St-83}

\textbf{Stallings graph :}  Consider a triple $(\Gamma, S, p)$ consisting of a marked graph $\Gamma$, a connected subgraph $S$ with no valence one vertices and a homotopy equivalence $p : \Gamma\rightarrow R_N$, which is a homotopy inverse of the marking on $\Gamma$. 
$p$ takes vertices to vertices and edges to edge-paths and is a immersion when restricted to $S$. This enforces that any path in $S$ is mapped to a path in $R_N$. If in addition we have $[F]=[S]$, we say that the triple $(\Gamma, S, p)$ is a representative of the free factor $F$ and $S$ is
 said to be the \textit{Stallings graph} for $F$. We get a metric on each edge of $\Gamma$ by pulling back the metric from $R_N$ via $p$. Under this setting, a line $l$ is carried by $[F]$ if and only if its realization $l_\Gamma$ in $\Gamma$ is contained in $S$, in which case the restriction of 
$p$ to the line $l_\Gamma$ is an immersion whose image in $R_N$ is $l$.

\begin{lemma}
 Every proper free factor $F$ has a realization $(\Gamma, S, p)$.
\end{lemma}
 The description of the Stallings graph and the proof of the fact mentioned above can be found in the proof of Theorem I, section 2.4 in \citep{HM-13c}. The proof of existence uses the Stallings fold theorem to construct $S$, hence the name.

The following fact is an important tool for the proof of the main theorem. It is used to detect fully-irreducibility when we are given an exponentially growing element.
\begin{lemma}
\label{fih}
 Let $\phi\in \out$ be a rotationless and exponentially growing element. Then for each attracting lamination $\Lambda^+_\phi$, if the subgroup system $\mathcal{A}_{na}\Lambda^+_\phi$ is trivial and the free factor system $\mathcal{A}_{supp}(\Lambda^+_\phi)$ is not proper then $\phi$ is fully irreducible.
\end{lemma}

It is worth noting that the fully irreducible element we will get from this lemma is hyperbolic, since $\mathcal{A}_{na}\Lambda^+$ trivial implies that there are no periodic conjugacy classes and by \citep{Br-00} it is hyperbolic. In the next lemma we will extend this result to include the geometric case also.

\begin{lemma}
\label{mcg}
 For each exponentially growing $\phi\in \out$ if there exists an attracting lamination $\Lambda^+_\phi$ such that $\mathcal{A}_{na}\Lambda^+_\phi = [\langle c \rangle]$, where $\mathcal{A}_{supp} [c]$ and $\mathcal{A}_{supp}(\Lambda^+)$ are not proper then $\phi$ is fully irreducible and  geometric.
\end{lemma}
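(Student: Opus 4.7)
The plan is to pass to a rotationless power, apply the CT structure theory, and use the Weak Attraction theorem to rule out proper invariant free factor systems. Let $K$ be such that $\phi^K$ is rotationless. The attracting lamination $\Lambda^+_\phi$, its support $\mathcal{A}_{supp}(\Lambda^+_\phi)$, and the nonattracting subgroup system $\mathcal{A}_{na}(\Lambda^\pm_\phi)$ are unchanged by passing to a positive power, so all hypotheses transfer verbatim to $\phi^K$; moreover, $\phi^K$ being fully irreducible and geometric implies the same for $\phi$. So it suffices to work with $\phi^K$ and fix a CT $f:G\to G$ representing it with the EG stratum $H_s$ corresponding to $\Lambda^+_\phi$.

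First I would establish that $H_s$ is geometric. By Proposition 5.5 of \citep{HM-13} it is enough to check that $\mathcal{A}_{na}(\Lambda^+_\phi)=[\langle c\rangle]$ is not a free factor system. If it were, then since $[\langle c\rangle]$ carries $[c]$, the minimality property of the support would give $\mathcal{A}_{supp}[c]\sqsubset[\langle c\rangle]$; but $\mathcal{A}_{supp}[c]=[F_r]$, so $F_r$ would have to be a free factor of (a conjugate of) $\langle c\rangle$, which is impossible since $r\geq 3$ while $\langle c\rangle$ has rank one. Hence $H_s$, and therefore $\Lambda^+_\phi$, is geometric.

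The main step is proving $\phi^K$ fully irreducible. Suppose for contradiction that $\mathcal{F}$ is a proper $\phi^K$-invariant free factor system. Duality of the lamination pair gives $\mathcal{A}_{supp}(\Lambda^-_\phi)=\mathcal{A}_{supp}(\Lambda^+_\phi)=[F_r]$, so if $\mathcal{F}$ carried either lamination then $[F_r]\sqsubset \mathcal{F}$ would force some factor of $\mathcal{F}$ to contain $F_r$ as a free factor, making $\mathcal{F}=[F_r]$, not proper; the same argument with $\mathcal{A}_{supp}[c]=[F_r]$ shows $\mathcal{F}$ does not carry $[c]$. Now let $[g]$ be any conjugacy class carried by $\mathcal{F}$. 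By the Weak Attraction theorem (Fact \ref{WAT}) applied to $\phi^K$, either $[g]$ is attracted to one of $\Lambda^\pm_\phi$ under $\phi^{\pm K}$, or $[g]$ is carried by $\mathcal{A}_{na}(\Lambda^\pm_\phi)=[\langle c\rangle]$. In the first case, all iterates of $[g]$ under $\phi^{\pm K}$ remain in the $\phi^K$-invariant $\mathcal{F}$, and since the set of lines carried by a subgroup system is closed in the weak topology, the weak limits — generic leaves of $\Lambda^\pm_\phi$ — would also be carried by $\mathcal{F}$, contradicting what we just showed. So every conjugacy class carried by $\mathcal{F}$ has the form $[c^n]$. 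But $\mathcal{F}$ contains a nontrivial free factor of $F_r$, which contains primitive elements of $F_r$; since $\mathcal{A}_{supp}[c]=[F_r]$ forces $c$ into no proper free factor, no power of $c$ is primitive, contradicting the existence of a primitive conjugacy class in $\mathcal{F}$.

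Therefore $\phi^K$, hence $\phi$, is fully irreducible. Combined with the already established geometricity of the unique EG stratum, $\phi^K$ is a geometric fully irreducible element, induced by a pseudo-Anosov homeomorphism of a compact surface with one boundary component, and geometricity descends to $\phi$. The main obstacle I expect is the core dichotomy step: carefully invoking the Weak Attraction theorem together with the closedness of carried lines to promote an attracted conjugacy class into a carried leaf of $\Lambda^\pm_\phi$, and then closing the contradiction using the fact that the filling hypothesis $\mathcal{A}_{supp}[c]=[F_r]$ denies primitivity to every power of $c$.
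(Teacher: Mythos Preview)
Your proof is correct and reaches the same contradiction as the paper, but by a different route. The paper's argument is more structural: it realizes the hypothetical proper invariant free factor $F$ as a filtration element $G_r$ in a CT for the rotationless power, observes that since $\Lambda^+_\phi$ corresponds to the top stratum $H_s$ the entire $G_{s-1}$ (and hence $G_r$) lies in the nonattracting subgraph $Z$, and concludes directly that $[F]$ is carried by $\mathcal{A}_{na}(\Lambda^+_\phi)=[\langle c\rangle]$, which is absurd since $[c]$ fills. Your argument replaces this CT computation with a dynamical one: you use the Weak Attraction Theorem (equivalently Fact~\ref{NAS}(5)) together with closedness of the set of lines carried by a free factor system to force every conjugacy class in the invariant free factor into $[\langle c\rangle]$, and then derive the same contradiction via primitivity. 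Your approach is a bit longer but more portable, since it does not require choosing a CT adapted to the particular free factor; the paper's approach is shorter and more concrete. You also spell out the geometricity conclusion explicitly (via the criterion that $\mathcal{A}_{na}$ is a free factor system iff the stratum is nongeometric), which the paper's proof leaves implicit.
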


\begin{proof}
 We follow the footsteps of the proof of the fact \ref{fih}. Suppose $\phi$ is not fully irreducible. Pass on to a rotationless power and assume $\phi$ is rotationless. Let $[F]$ be the conjugacy class of the proper, non trivial  free factor fixed by $\phi$. 
 Choose a CT $f:G \rightarrow G$ such that $F$ is realized by some filtration element $G_r$ and $[G_r] = [F]$. Since $\mathcal{A}_{supp}(\Lambda^+)$ is not proper, the lamination $\Lambda^+_\phi$ corresponds to the highest strata $H_s$ and $r<s$. Next recall that a strata $H_i\subset G\setminus Z$ if and only if 
  there exists some $k \geq 0$ so that some term in the complete splitting of $f^k_\# (E_i)$ (for some edge $E_i \subset H_i$)is an edge in $H_s$. This implies that $G_{s-1} \subset Z$, since $f$ preserves the filtration. Hence we have $G_r\subset G_{s-1}\subset Z$. This implies that $[F] \lneq[\langle c \rangle]$. 
  So, $F = \langle c ^p \rangle$ for some $p>1$. But the conjugacy class $[c]$ fills contradicts our assumption that $F$ is a proper non trivial free factor.
\end{proof}

We write down the hypothesis for main theorem followed by some remarks as to how the assumptions are used.

\begin{definition}
\label{independent}

Let $\phi, \psi \in \out$ be exponentially growing outer automorphisms, which are not conjugate to powers of each other and which do not have a common periodic free factor. 
Also let $\phi,\psi$ have some dual lamination pairs $\Lambda^\pm_\phi$ and $\Lambda^\pm_\psi$, respectively, such that the following hold:
\begin{itemize}

 \item $\psi^\epsilon$ leaves $\Lambda^\epsilon_\psi$ invariant and $\phi^\epsilon$ leaves $\Lambda^\epsilon_\phi$ invariant.
 \item $\Lambda^\pm_{\psi}$ is attracted to $\Lambda^\epsilon_{\phi}$ under iterates of $\phi^\epsilon$ and $\Lambda^\pm_\phi$ is attracted to $\Lambda^\epsilon_\psi$ under 
 iterates of $\psi^\epsilon$
 \item $\{\Lambda^\pm_\phi\} \cup \{\Lambda^\pm_\psi\}$ fills.
 \item Both $\Lambda^\pm_\psi$ and $\Lambda^\pm_\phi$ are non-geometric or every lamination pair of every element of $\langle \psi, \phi \rangle$ is geometric.
 \item The non-attracting subgroup systems $\mathcal{A}_{na}(\Lambda^\pm_\phi)$ and $\mathcal{A}_{na}(\Lambda^\pm_\psi)$ are mutually malnormal.
\end{itemize}

 Any two pairs $(\phi, \Lambda^\pm_\phi), (\psi,\Lambda^\pm_\psi)$ which satisfy the criterions above will be called \textit{pairwise independent}. When we abuse the definition and say that  \textit{$\phi$, $\psi$ are pairwise independent}, it is understood that we also have 
 some dual lamination pairs $\Lambda^\pm_\phi$ and $\Lambda^\pm_\psi$ which satisfy the set of conditions above.
 
\end{definition}

\begin{remark}
Bulleted item 5 is a technical requirement to prove the first conclusion of the Proposition \ref{pingpong}. The rest are as follows:
 \begin{enumerate}
  \item The first two bullets are required to prove the conclusion about the free group of rank two in the statement of Theorem \ref{main} and are also needed to apply Proposition \ref{pingpong}.
  \item The third bullet will be used to deduce a contradiction in the proof of showing that $\xi$ is fully irreducible
 \item The fifth bullet implies that there is no line that is carried by both the nonattracting subgroup subgroup systems. This will be used to conclude hyperbolicity.
 \end{enumerate}

\end{remark}


The following result is same as Lemma 3.4.2 in \citep{BFH-00}.
\begin{lemma}
\label{free}
Let $\phi,\psi\in \out$ be exponentially growing elements, which are not conjugates to powers of each other, which satisfy the first three bullets . Then there exists some $M_0 \geq 0$ such that the group $G_{M_0} = \langle \psi^m , \phi^n \rangle$ is free for every $m,n \geq M_0$
\end{lemma}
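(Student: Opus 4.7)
The plan is to prove this by a standard ping-pong argument on the space of lines $\mathcal{B}$, along the lines of Lemma 3.4.2 of \citep{BFH-00}. The key observation is that the ping-pong dynamics recorded in items (2) and (3) of Proposition \ref{pingpong} only require the first three bullets of the independence hypothesis: the geometricity condition (bullet 4) enters the proof of Proposition \ref{pingpong} only to establish duality of the output laminations, which plays no role here.

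First, I would invoke the construction in the proof of Proposition \ref{pingpong} to produce attracting neighborhoods $V^{\pm}_\phi$ of $\Lambda^{\pm}_\phi$ and $V^{\pm}_\psi$ of $\Lambda^{\pm}_\psi$, together with some $M_0 > 0$, such that for all $m,n \geq M_0$,
\[\psi^{\pm m}(V^{\pm}_\phi) \subset V^{\pm}_\psi \quad \text{and} \quad \phi^{\pm n}(V^{\pm}_\psi) \subset V^{\pm}_\phi,\]
where the sign of the exponent on the left matches the sign on the target neighborhood on the right. Steps 1--5 of that proof go through verbatim using only bullets 1 and 2 of the independence hypothesis (invariance and cross-attraction of the dual lamination pairs). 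The four laminations $\Lambda^+_\phi, \Lambda^-_\phi, \Lambda^+_\psi, \Lambda^-_\psi$ are pairwise distinct because $\phi,\psi$ are not powers of each other and have no common periodic free factor (so their lamination supports differ); attracting neighborhoods admit a basis of arbitrarily small weak-open sets of the form $N(G,\beta)$ for ever longer subpaths $\beta$ of a generic leaf, so I would shrink the four neighborhoods to pairwise disjoint ones, enlarging $M_0$ as needed to recover the inclusions above.

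With disjoint neighborhoods and the ping-pong inclusions in hand, the standard ping-pong lemma finishes the argument: pick any line $\gamma \in \mathcal{B}$ outside all four neighborhoods (possible since each is a proper open subset), and for any nontrivial reduced word $w$ in $\psi^m, \phi^n$ with $m,n \geq M_0$, apply $w$ to $\gamma$ letter-by-letter from right to left; the inclusions force $w(\gamma)$ into the unique neighborhood indexed by the leftmost letter of $w$, so $w(\gamma) \neq \gamma$ and $w$ represents a nontrivial element of $\out(F_r)$. Hence $\langle \psi^m, \phi^n\rangle$ is free of rank 2. The main obstacle I anticipate is the shrinking step, since attracting neighborhoods form a restricted class of open sets and the ping-pong inclusions must be reverified after shrinking; this is resolved by observing that the set of lines attracted to $\Lambda^+_\phi$ under $\phi$ is open, so any sufficiently small attracting neighborhood of $\Lambda^+_\psi$ contained in this open set is uniformly pushed into $V^+_\phi$ by $\phi^n$ for $n$ large enough (by compactness of the lamination), and analogously for the other three inclusions.
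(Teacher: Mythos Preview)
The paper gives no proof of this lemma at all; it simply records that the result is the same as Lemma~3.4.2 of \citep{BFH-00}. Your approach---ping-pong on $\mathcal{B}$ using the neighborhood inclusions extracted from the proof of Proposition~\ref{pingpong}---is precisely the content of that lemma, so you are aligned with the paper's intent.

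There is, however, a slip in the final step. The inclusions you record, $\psi^{\pm m}(V^{\pm}_\phi)\subset V^{\pm}_\psi$ and $\phi^{\pm n}(V^{\pm}_\psi)\subset V^{\pm}_\phi$, only control points \emph{inside} the four neighborhoods. If you start with a line $\gamma$ lying outside all four, the very first letter of $w$ applied to $\gamma$ is not governed by any of your inclusions, so you cannot conclude that the orbit enters a specified $V$. You have conflated this setup with the stronger ``complement'' form of ping-pong (where $\psi^{\pm m}$ maps $\mathcal{B}\setminus V^{\mp}_\psi$ into $V^{\pm}_\psi$), which you have not established. The fix is immediate: set $X_\psi=V^+_\psi\cup V^-_\psi$ and $X_\phi=V^+_\phi\cup V^-_\phi$; your inclusions, combined with the nesting $\psi(V^+_\psi)\subset V^+_\psi$ (and its three analogues) built into the definition of attracting neighborhood, give $(\psi^m)^k(X_\phi)\subset X_\psi$ and $(\phi^n)^k(X_\psi)\subset X_\phi$ for every $k\neq 0$. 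With $X_\psi$ and $X_\phi$ disjoint, the standard ping-pong lemma yields freeness directly---no external basepoint is needed.
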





We say that $l$ is a \textit{periodic line} if $l = .....\rho\rho\rho....$ is a bi-infinite iterate of some finite path $\rho$. In this case we write $l = \rho_\infty$

\begin{theorem}
\label{main}
 Let $\phi , \psi $ be two  exponentially growing elements of $\out$, such that there exists some dual lamination pairs $\Lambda^\pm_\phi$ and $\Lambda^\pm_\psi$, which makes $\phi$, $\psi$ pairwise independent. 
Then there exists an $M\geq0$, 
such that for all $n,m\geq M$ the group $\langle\psi^m,\phi^n\rangle$ will be free of rank two and every element of this free group, 
not conjugate (in $G$) to some power of the generators, will be hyperbolic and fully-irreducible.
\end{theorem}

\begin{proof}
 We already know that there exists some $M_0>0$, such that for all $m,n\geq M_0$ the group $ \langle\psi^m,\phi^n\rangle$ will be free group of rank two (Lemma\ref{free}).It remains to show that , 
 by increasing $M_0$ if necessary, every reduced word in this group, not conjugate to some power of the generators, will be fully irreducible. We shall prove it by contradiction.

Suppose that there does not exist any such $M\geq M_0$. This implies that for $M$ large, there exist $m(M), n(M)$, such that the group $\langle \psi^{m(M)}, \phi^{n(M)}\rangle$ contains at least one reduced word $\xi_M$ (not conjugate to some power of the generators) which is either reducible or fully irreducible but not hyperbolic. Using the hypothesis of mutual malnormality of $\mathcal{A}_{na}(\Lambda^\pm_\psi)$ and 
$\mathcal{A}_{na}(\Lambda^\pm_\phi)$ together with conclusion 1 of Proposition \ref{pingpong} we know that for sufficiently large $M$, $\mathcal{A}_{na}(\Lambda^\pm_{\xi_M})$ must be trivial. Hence after passing to a subsequence if necessary, assume that all $\xi_M$'s are reducible.

 We also make an assumption that this $\xi_M$ begins with a nonzero power of $\psi$ and ends in some nonzero power of $\phi$; if not, then we can conjugate to achieve this.
Thus as $M$ increases, we have a sequence of reducible elements $\xi_M\in \out$. Pass to a subsequence to assume that the $\xi_M$'s begin with a positive power of $\psi$ and end with a positive power of $\phi$. If no such subsequence exist, 
then change the generating set of the group by replacing generators with their inverses.

Let $\xi_M=\psi^{m_1}\phi^{\epsilon'_1 n_1}........\psi^{\epsilon_k m_k}\phi^{n_k}$ \hspace{2pc}where $m_i=m_i(M),n_j=n_j(M)$ and $k$ depend on $M$.

We note that by our assumptions, the exponents get larger as $M$ increases.
From the Ping-Pong lemma we know that there exists attracting neighborhoods $V^\pm_\psi$  and $V^\pm_\phi$ for  the dual lamination pairs $\Lambda^\pm_\psi$ and $\Lambda^\pm_\phi$ , respectively,  such that if $i\neq 1$ $$\psi^{{\epsilon_i m_i}(M)}(V^\pm_\phi)\subset V^{\epsilon_i}_\psi \textrm{ and } \psi^{m_1(M)}(V^\pm_\phi)\subset V^+_\psi\subset V^+_{\xi_M}$$
where each of $\xi_M$'s are exponentially-growing and equipped with a lamination pair $\Lambda^\pm_{\xi_M}$ (with attracting neighborhoods $V^\pm_{\xi_M}$) such that $\mathcal{A}_{na}\Lambda^\pm_{\xi_M}$ is trivial (using conclusion 1 of proposition \ref{pingpong} and bullet 6 in the hypothesis set)

Using Lemma \ref{fih} the automorphisms $\xi_M$'s being reducible implies that $\mathcal{A}_{supp}(\Lambda^\pm_{\xi_M})=[F_{\xi_M}]$ is proper. Fix a marked metric graph $H=R_N$, the standard rose. Denote the stallings graph (discussed at the beginning of this section) associated to $F_{\xi_M}$ by $K_M$, equipped with the immersion $p_M:K_M \rightarrow H$. A \textit{natural vertex} is a vertex with valence greater than two and a \textit{natural edge} is an edge between two natural vertices. We can subdivide every natural edge of $K_M$ into \textit{edgelets}, so that each edgelet is mapped to an edge in $H$ and label the edgelet by its image in $H$. 

Let $\gamma_M^-$ be a generic leaf of $\Lambda^-_M$ and $\gamma^+_M$ be a generic leaf of $\Lambda^+_M$.  We note that every natural edge in $K_M$ is mapped to an edge path in $H$, which is crossed by both $\gamma^-_M$ and $\gamma^+_M$. 

We claim that the edgelet length of every natural edge in $K_M$ is uniformly bounded above. \textit{Once we have proved the above claim}, it immediately follows that (after passing to a subsequence if necessary) there exists homeomorphisms $h_{M,M'}: K_M \rightarrow K_{M'}$ which maps edgelets to edgelets and preserves labels. Hence, we can assume that the sequence of graphs $K_M$ is eventually constant (upto homeomorphism) and $F_{\xi_M}=F$, is independent of $M$. 

Next, observe that, if $\alpha$  is any finite subpath of a generic leaf of $ \Lambda^+_\psi$, by enlarging $\alpha$ if necessary we can assume that it defines an attracting neighborhood of $\Lambda^+_\psi$.
By using the uniformity of attracting neighborhoods from the ping-pong lemma (conclusion 4,5) we know that $\gamma^+_M$ belongs to this neighborhood for sufficiently large $M$. This means $\alpha \subset \gamma^+_M$ for sufficiently large $M$, which implies that the realizations of $\lambda^+_\psi$ lift to $K_M$. A similar argument gives the same conclusion about $\lambda^-_\phi$. Thus both $\lambda^+_\psi$ and $\lambda^-_\phi$ are carried by $F$ , which implies that $F$ carries $\Lambda^+_\psi$ and $\Lambda^-_\phi$ which contradicts our hypothesis. Hence, $F$ cannot be proper and so the $\xi_M$'s are fully irreducible for all sufficiently large $M$ - contradiction. 

\textit{proof of claim :} Suppose that the edgelet length of the natural edges of $K_M$ is not uniformly bounded. Then there exists a  sequence of natural edges $\{E_M\}$ such that their edgelet lengths go to infinity as $M\rightarrow \infty$. Let $l$ be a weak limit of some subsequence $\{E_M\}$ and $\sigma\subset l$ be any finite subpath. For sufficiently large $M$, $\sigma\subset E_M \subset \gamma^+_M$ . Hence $l\in L^+ = \{$All weak limits of all subsequences of $\gamma^+_M\}$. Similarly, $l\in L^-=\{$All weak limits of all subsequences of $\gamma^-_M\}$. It remains to show that the intersection of this two sets is empty. Suppose not.

Let $\gamma^*$ be a weak limit of some subsequence of $\gamma^-_M$. We claim that $\gamma^*$ is not attracted to $\Lambda^+_\phi$. If not, then $\phi^p(\gamma^*)\in V^+_\phi$ for some $p\geq 0$. This means that for sufficiently large $M$, $\phi^p(\gamma^-_M)\in V^+_\phi$, implying that $\xi_M (\gamma^-_M)\in V^+_{\xi_M}$ for sufficiently large $M$, which is a contradiction to the fact that a generic leaf of $\Lambda^-_{\xi_M}$ is not attracted to $\Lambda^+_{\xi_M}$ under action of $\xi_M$. By similar arguments we can show that if $\gamma^*$ is a weak limit of some sequence of $\gamma^+_M$, then $\gamma^*$ is not attracted to $\Lambda^-_{\psi}$.

Let $l\in L^+ \cap L^-$. Then $l\in \mathcal{B}_{na}(\Lambda^+_\phi)\cap \mathcal{B}_{na}(\Lambda^-_\psi)$ by above arguments and by fact \ref{NAL}. If $l$ is not carried by $\mathcal{A}_{na}(\Lambda^\pm_\psi)$, then by the weak attraction theorem (Lemma \ref{WAT} ) $l$ is contained in every attracting neighborhood of the generic leaf $\lambda^+_\psi$.  This implies that $\lambda^+_\psi\subset cl(l) \subset \mathcal{B}_{na}(\Lambda^+_\phi) $. But this contradicts our hypothesis that $\Lambda^+_\psi$ is attracted to $\Lambda^+_\phi$. Hence $l$ must be carried by $\mathcal{A}_{na}(\Lambda^\pm_\psi)$.
By a symmetric argument we can show that $l$ must be carried by $\mathcal{A}_{na}(\Lambda^\pm_\phi)$. But this is not possible, since by our assumption these two subgroup systems are mutually malnormal. So $L^+ \cap L^- =\emptyset$

\end{proof}

\section{APPLICATIONS }
\label{sec:12}
This section is dedicated to looking at some applications of the Theorem \ref{main} . The corollary that is stated below is probably the first time that we see some result on the dynamics of mixed type of fully irreducible automorphisms. There are in fact three corollaries packed under the same hood. 
The first one is a well known theorem from \citep{KL-10} but the proof in their paper is very different from the technique we use here. The other two items are new.

\begin{corollary}
\label{app}
 If $\phi, \psi$ are fully-irreducible elements of $\out$ which are not conjugate to powers of each other, then there exists an integer $M\geq 0$ such that for every $m,n \geq M$, $G= \langle \psi^m, \phi^n \rangle$ is a free group of rank two, all whose elements are fully-irreducible.\\
 Moreover, $M$ can be chosen so that
 \begin{enumerate}
  \item If both $\phi, \psi$ are hyperbolic, then every element of $G$ is hyperbolic.
  \item If $\psi$ is hyperbolic and $\phi$ is geometric, then every element of $G$ not conjugate to a power of $\phi$ is hyperbolic.
  \item If both $\psi$ and $\phi$ are geometric but do not fix the same conjugacy class, then every element of $G$ not conjugate to a power of $\phi$ or $\psi$ is hyperbolic.
 \end{enumerate}

\end{corollary}

\begin{proof}
 The conclusion about the free group is from Proposition \ref{free}. It is easy to check that $\phi,\psi$ satisfy the hypothesis of Theorem \ref{main} except bullet four in definition\ref{independent}. But Proposition \ref{duality} tells us that bullet four is not required to draw all the conclusions in pingpong for this special case. Hence we can make the conclusion of the theorem which, along with the fact that the conjugate of any power of a 
fully-irreducible outer automorphism is also fully-irreducible, gives us that every element of $G$ is fully-irreducible.

 We now look at the proofs of the statements in moreover part:

 \begin{enumerate}
  \item follows immediately since conclusion 1 of Proposition\ref{pingpong} tells us that the $\mathcal{A}_{na}\Lambda^\pm_\xi$ is trivial which implies that no element of $G$ has any periodic conjugacy classes. \citep{Br-00} tells us that they are hyperbolic.
  \item follows from corollary \ref{hyp1} and Theorem \ref{main}
  \item follows from corollary \ref{hyp2} and Theorem \ref{main}
 \end{enumerate}

\end{proof}

The following result is another interesting application of the main theorem and several other technical lemmas that we have developed along the way. The proof of the result is almost same as Theorem \ref{main} same but a small modification is needed in the last part of the proof. 

A version of the theorem below, when the surface $\mathcal{S}$ is without boundary is an important result proved in \citep{FaM-02} where they develop the theory of convex cocompact subgroups of $\mathcal{MCG}(\mathcal{S})$.

\begin{theorem}
\label{anosov}
Let $\mathcal{S}$ be a connected, compact surface (not necessarily oriented) with one boundary component. Let $f,g \in \mathcal{MCG(S)}$ be pseudo-Anosov homeomorphisms of the surface which are not conjugate to powers of each other. Then there exists some integer $M$ such that the group $G = \langle f^m, g^n \rangle$ will be free for every $m,n> M$, and every element of this group will be a pseudo-Anosov. 

\end{theorem}

\begin{proof}
 Let $\psi, \phi \in \out$ be the fully irreducible, geometric outer automorphisms induced by $f,g$ respectively, where $\F = \pi_1(\mathcal{S})$. We will prove the result for $\psi, \phi\in \out$ which will imply the theorem.

 Let $[c]$ be the conjugacy class corresponding to $\partial\mathcal{S}$. Then $[c]$ fills $\F$ and $\psi([c]) = \phi([c]) = [c] $ and $ \mathcal{A}_{na}\Lambda^\pm_\psi = \mathcal{A}_{na}\Lambda^\pm_\phi = [\langle c \rangle]$. 
Proposition \ref{pingpong}  along with proposition \ref{duality} tells us that there exists an integer $M$ such that every cyclically reduced word $\xi$ in the group $G$ will be exponentially growing with a dual lamination pair $\Lambda^\pm_\xi$ such that any conjugacy class carried by $\mathcal{A}_{na} \Lambda^\pm_\xi$ will be carried by both $\mathcal{A}_{na}\Lambda^\pm_\psi$ and $\mathcal{A}_{na}\Lambda^\pm_\phi$. Hence we can conlude that $\mathcal{A}_{na}\Lambda^\pm_\xi = [\langle c \rangle]$.
We check that we satisfy all the hypothesis required for the main theorem except the last two bullets. Proposition \ref{duality} voids the need for bullet five. We modify the proof of Theorem \ref{main} by using Lemma \ref{mcg} so that we do not need the last bullet.
Using Proposition \ref{free} we can conclude that by increasing $M$ if necessary, we may assume that $G$ is free of rank two.

The proof of being being fully irreducible follows the exact same steps, but in this case we use Lemma \ref{mcg} to start the contradiction argument as in the proof of Theorem \ref{main}. Namely, assume that there does not exist any $M$ such that every element of $G$ is fully irreducible and  let $\xi_M\in G$ be a reducible element for each $M$. After passing to a subsequence if necessary assume that

$\xi_M=\psi^{m_1}\phi^{\epsilon'_1 n_1}........\psi^{\epsilon_k m_k}\phi^{n_k}$ \hspace{2pc}where $m_i=m_i(M)>0,n_j=n_j(M)>0$ and $k$ depend on $M$. From above discussion we have $\mathcal{A}_{na}\Lambda^\pm_{\xi_M} = [\langle c \rangle ]$, where $\mathcal{A}_{supp}[c]$ is not proper. Using Lemma \ref{mcg}, $\mathcal{A}_{supp}\Lambda^\pm_{\xi_M}$ must be proper and non trivial for all $M$. 
The rest of the argument follows through except that when we look at the part of the proof of Theorem \ref{main} separated under \textquotedblleft proof of claim \textquotedblright, the proof breaks down. We will just focus on this part and modify the proof to finish our theorem.

The goal of that part of the proof is to show that the edgelet length of the natural edges of $K_M$ is uniformly bounded. Suppose the claim is false. Then there exists a sequence of natural edges $E_M$ whose edgelet length goes to infinity. Let $l$ be some weak limit of this sequence. If we show $l$ is a periodic line $l = \rho_\infty$ then $\rho\subset E_M$ for all large  $M$, which implies that free factor support of $\rho$ is contained in the proper free factor $F_{\xi_M}$. The contradiction is achieved by showing that $[\rho]$ fills $\F$.

Let $\sigma$ be a weak limit of some subsequence of $\gamma^-_M$. We claim that $\sigma$ is not attracted to $\Lambda^+_\phi$. If not, then $\phi^p(\sigma)\in V^+_\phi$ for some $p\geq 0$. This means that for sufficiently large $M$, $\phi^p(\gamma^-_M)\in V^+_\phi$, implying that $\xi_M (\gamma^-_M)\in V^+_{\xi_M}$ for sufficiently large $M$, which is a contradiction to the fact that a generic leaf of $\Lambda^-_{\xi_M}$ is not attracted to $\Lambda^+_{\xi_M}$ under action of $\xi_M$. By similar arguments we can show that if $\sigma'$ is a weak limit of some sequence of $\gamma^+_M$, then $\sigma'$ is not attracted to $\Lambda^-_{\psi}$.

Let $l\in cl(\sigma)\cap cl(\sigma')$. Then $l\in \mathcal{B}_{na}(\Lambda^+_\phi)\cap \mathcal{B}_{na}(\Lambda^-_\psi)$ by above arguments and by Lemma \ref{NAL}. If $l$ is not carried by $\mathcal{A}_{na}(\Lambda^\pm_\psi)$, then by the weak attraction theorem (Lemma \ref{WAT}) $l$ is contained in every attracting neighborhood of the generic leaf $\lambda^+_\psi$.  This implies that $\lambda^+_\psi\subset cl(l) \subset cl(\sigma) \subset \mathcal{B}_{na}(\Lambda^+_\phi) $. But this contradicts our hypothesis that $\Lambda^+_\psi$ is attracted to $\Lambda^+_\phi$. Hence $l$ must be carried by 
$\mathcal{A}_{na}(\Lambda^\pm_\psi) = [\langle c \rangle]$. Hence $l = c_\infty$ is a periodic line and $[c]$ fills $\F$ and we get our contradiction by taking $c=\rho$. 

\end{proof}

\textbf{Example: }
Consider the following automorphisms $\phi$ and $\psi$.
$$\phi: a \mapsto a, b \mapsto Abaca, c\mapsto bacaB $$
$$\psi: a \mapsto ab, b \mapsto ac, c\mapsto a $$
Note that $\phi$ is a reducible, exponentially growing outer automorphism and $\psi$ is a hyperbolic, fully irreducible outer automorphism. A relative train track map for $\phi$
is given by

\begin{tikzpicture}[ node distance=3cm]

\usetikzlibrary{arrows}
\usetikzlibrary{decorations.markings}

\tikzstyle{every node}=[circle, draw, fill=black!50,
                        inner sep=0pt, minimum width=2pt]

\tikzstyle arrowstyle=[->, scale=1]

\tikzstyle directed=[postaction={decorate,decoration={markings,
    mark=at position .65 with {\arrow[arrowstyle]{stealth};}}}]

\node (1) {};
  \node (2) [below left of=1] {};
  \node (3) [below right of=1] {};

\path[every node/.style={font=\sffamily\small}]
    (1) edge [directed] node [right] {d} (3)  
    (2) edge [directed] node [below] {e} (3)
    (2) edge [directed]  node [left] {b} (1)
    (2) edge [directed, left, min distance =2cm] node [below] {c} (2)
    (3) edge [directed, min distance = 2cm, right,] node [below] {a} (3);
   
\end{tikzpicture}

 $$ a\mapsto a, b \mapsto bdaEc, c \mapsto bdaEceaDB, d\mapsto ea, e \mapsto ea $$

 The stratum are given by $H_1=\{a\}, H_2=\{e\}, H_3=\{d\}, H_4=\{c, b\}$
 
 We leave it to the reader to verify that the exponentially growing strata is  $H_4=\{c, b\}$ and the nonattracting subgroup system of the corresponding attracting lamination (in this example it is unique) is 
 $\mathcal{A}_{na}(\Lambda^\pm_\phi) = \{[\langle a \rangle]\}$. This implies that the corresponding attracting lamination is nongeometric. $\psi$ being a hyperbolic fully-irreducible outer automorphism, the unique 
 dual lamination pair $\Lambda^\pm_\psi$ is nongeometric and its nonattracting subgroup system is trivial.
 
 So, $\phi$ and $\psi$ satisfies all necessary conditions to apply Theorem \ref{main}, and so there exists some $M > 0$ such that for all $m,n \geq M$ the group $\langle \phi^m, \psi^n \rangle$ is a free group 
 of rank two and every element not conjugte to power of generators is hyperbolic and fully irreducible. It is to be noted that getting a precise value of $M$ is not easy since it depends on a lot of factors. However, 
 after extensively checking with the software developed by Thierry Coulbois for computing train track maps, it seems like $M=2$ might work in this case. It would be very intersting if we could understand if  $M$ is 
 dependent only on the rank of the $\F$.


\section{Relativized version of main theorem}

For the sake of keeping this work concise, we will only briefly go through the definitions and the reader is requested to refer to the work of Handel and 
Mosher titled ``Subgroup decomposition in $\out$'' (part IV in particular \citep{HM-13d}). 

Let $\mathcal{F}$ denote a free factor system that is left invariant by $\phi \in \out$. Then $\phi$ is said to be \textit{fully-irreducible relative to $\mathcal{F}$} if there does not exist any $\phi$-periodic free factor system $\mathcal{F}'$ such that $\mathcal{F} \sqsubset \mathcal{F}'$ and $\mathcal{F} \ne \mathcal{F}'$.

From Handel-Mosher's work on loxodromic elements of the free splitting complex \citep{HM-14b} one defines the concept of a \textit{co-edge} number for a free factor system $\mathcal{F}$ : it is an integer $\geq 1$ which is the minimum 
, over all subgraphs $H$ of a marked graph $G$ such that H realizes $\mathcal{F}$, of the number of edges in $G-H$ . Lemma 4.8 in \citep{HM-14b} gives an explicit formula for computing the co-edge 
number for a given free factor system.

Using relative train track theory one proves that if $\mathcal{F}$ has a co-edge number greater than or equal to 2, then the following are equivalent:

\begin{enumerate}
\label{relind}
 \item $\phi$ is fully irreducible relative to $\mathcal{F}$ (abbreviated as rel $\mathcal{F}$).
 \item There exists a dual lamination pair $\Lambda^\pm$ for $\phi$ such that $\Lambda^\pm$ fills relative to $\mathcal{F}$, and such that either $\Lambda^\pm$ is nongeometric and its 
 nonattracting subgroup system is simply $\mathcal{F}$ or $\Lambda^\pm$ is geometric and its nonattracting subgroup system is $\mathcal{F}$ plus another infinite cyclic component that together with $\mathcal{F}$ fills.
 Furthermore, the lamination pair $\Lambda^\pm$ is uniquely determined by this condition.
 
 \end{enumerate}
 
 \textbf{Remark:} $\Lambda^\pm$ \textit{fills relative to} $\mathcal{F}$ simply means that $\mathcal{F}_{supp}(\mathcal{F}, \Lambda^\pm) = [\mathbb{F}_N]$. Please note that the above equivalence is false if the 
 co-edge number is equal to 1.

 We now state the Relativized version of the pingpong lemma. This lemma is a modification of the pingpong proposition (proposition 1.3) in \citep{HM-13d} and the proof is exactly in the similar lines as the proof we have for 
 \ref{pingpong}, where the absolute version is just replaced by the relative versions and so will skip the details of the proof here.
 
 \textbf{Notation and setup:} Let $\mathcal{F}$ be a free factor system that is left invariant by $\phi, \psi \in \out$. Let $\Lambda^\pm_\phi, \Lambda^\pm_\psi$ be invariant dual lamination pairs for $\phi, \psi$ respectively.

 \begin{proposition}
  Suppose that the laminations $\Lambda^\pm_\phi, \Lambda^\pm_\psi$ each have a generic leaf $\lambda^\pm_\phi, \lambda^\pm_\psi$ which is fixed by 
  $\phi^\pm, \psi^\pm$ respectively, with fixed orientation. Also assume that the following conditions hold:
  
  \begin{itemize}
   \item $\Lambda^\pm_\phi$ is weakly attracted to $\Lambda^\epsilon_\psi$ under iterates of $\psi^\epsilon$ (where $\epsilon = +, -$).
   \item $\Lambda^\pm_\psi$ is weakly attracted to $\Lambda^\epsilon_\phi$ under iterates of $\phi^\epsilon$ (where $\epsilon = +, -$).
   \item $\mathcal{F} \sqsubset \mathcal{A}_{na}(\Lambda^\pm_\phi)$ and $\mathcal{F} \sqsubset \mathcal{A}_{na}(\Lambda^\pm_\psi)$
   \item Either both the lamination pairs $\Lambda^\pm_\psi, \Lambda^\pm_\psi$ are non-geometric or the subgroup $\langle \phi, \psi \rangle$ 
	    is geometric above $\mathcal{F}$
  \end{itemize}

 Then there exist attracting neighborhoods $V^\pm_\phi, V^\pm_\psi$ of $\Lambda^\pm_\phi, \Lambda^\pm_\psi$ respectively, 
 and there exists an integer $M$,  such that for every pair of finite sequences $n_i\geq M$ and $m_i\geq M$ if

 $$\xi= \psi^{\epsilon_1 m_1}\phi^{\epsilon_1 ' n_1}........\psi^{\epsilon_k m_k}\phi^{\epsilon_k ' n_k}$$ 
$(k\geq 1)$ is a cyclically reduced word then $\xi$ will be exponentially-growing and have a lamination pair $\Lambda^\pm_\xi$ satisfying the following properties:
  
  \begin{enumerate}
   \item $\Lambda^\pm_\xi$ is non-geometric if $\Lambda^\pm_\phi$ and $\Lambda^\pm_\psi$ are both non-geometric.
   \item $\mathcal{F}$ is carried by $\mathcal{A}_{na}(\Lambda^\pm_\xi)$ and  $\mathcal{A}_{na}(\Lambda^\pm_\xi)$ is caried by  $\mathcal{A}_{na}(\Lambda^\pm_\phi)$ and  
   $\mathcal{A}_{na}(\Lambda^\pm_\psi)$.
    \item $\psi^{m_i}(V^\pm_\phi) \subset V^+_\psi$ and $\psi^{-m_i}(V^\pm_\phi) \subset V^-_\psi$ .
  \item $\phi^{n_j}(V^\pm_\psi) \subset V^-_\phi$ and $\phi^{-n_j}(V^\pm_\psi) \subset V^-_\phi$ .
  \item $V^+_\xi \colon= V^{\epsilon_1}_\psi$ is an attracting neighborhood of $\Lambda^+_\xi$ 
  \item $V^-_\xi \colon= V^{-\epsilon_k '}_\phi$ is an attracting neighborhood of $\Lambda^-_\xi$ 
  \item (uniformity) Suppose $U^{\epsilon_1}_\psi$ is an attracting neighborhood of $\Lambda^{\epsilon_1}_\psi$ then some generic leaf of $\Lambda^+_\xi$ belongs to $U^{\epsilon_1}_\psi$ for 
  sufficiently large $M$.
  \item (uniformity) Suppose $U^{\epsilon_k'}_\phi$ is an attracting neighborhood of $\Lambda^{\epsilon_k'}_\phi$ then some generic leaf of $\Lambda^+_\xi$ belongs to $U^{\epsilon_k'}_\phi$ for 
  sufficiently large $M$.
  \end{enumerate}

 \end{proposition}

\begin{definition}
 Let $\phi, \psi \in \out$ be exponentially growing outer automorphisms with invariant lamination pairs $\Lambda^\pm_\phi, \Lambda^\pm_\psi$ and let $\mathcal{F}$ be a free factor system 
 which is left invariant by both $\phi, \psi$. Suppose the following conditions hold:
 \begin{enumerate}
  \item None of the lamination pairs $\Lambda^\pm_\phi, \Lambda^\pm_\psi$ are carried by $\mathcal{F}$.
  \item $\{\Lambda^\pm_\phi\} \cup \{\Lambda^\pm_\psi\}$ fill relative to $\mathcal{F}$.
  \item $\Lambda^\pm_\phi$ is weakly attracted to $\Lambda^\epsilon_\psi$ under iterates of $\psi^\epsilon$ (where $\epsilon = +, -$).
  \item $\Lambda^\pm_\psi$ is weakly attracted to $\Lambda^\epsilon_\phi$ under iterates of $\phi^\epsilon$ (where $\epsilon = +, -$).
  \item $\mathcal{A}_{na}(\Lambda^\pm_\phi)$ and $\mathcal{A}_{na}(\Lambda^\pm_\psi)$ are mutually malnormal relative to $\mathcal{F}$.
  \item Either both the lamination pairs $\Lambda^\pm_\psi, \Lambda^\pm_\psi$ are non-geometric or the subgroup $\langle \phi, \psi \rangle$ 
	    is geometric above $\mathcal{F}$
 \end{enumerate}

 In this case we define the pair $(\phi, \Lambda^\pm_\phi) , (\psi, \Lambda^\pm_\psi)$ to be independent \textit{relative to $\mathcal{F}$}.
\end{definition}

\textbf{Remark:} Here the term \textit{mutually malnormal relative to $\mathcal{F}$} in condition 5 above, means that a line or a conjugacy class is carried by both 
$\mathcal{A}_{na}(\Lambda^\pm_\phi)$ and $\mathcal{A}_{na}(\Lambda^\pm_\psi)$ if and only if it is carried by $\mathcal{F}$.

 \begin{theorem}
  Given a free factor system $\mathcal{F}$ with co-edge number $\geq 2$, given $\phi, \psi \in \out$ each preserving $\mathcal{F}$, and given invariant lamination pairs 
  $\Lambda^\pm_\phi, \Lambda^\pm_\psi$, so that the pair $(\phi, \Lambda^\pm_\phi), (\psi, \Lambda^\pm_\psi)$ is independent reltive to $\mathcal{F}$, then there $\exists$  $M\geq 1$, 
  such that for any integer $m,n \geq M$, the group $\langle \phi^m, \psi^n \rangle$ is a free group of rank 2, all of whose non-trivial elements except perhaps the powers of $\phi, \psi$ 
  and their conjugates, are fully irreducible relative to $\mathcal{F}$.
 \end{theorem}

We will only give a brief sketch of the proof, going over the key points. The full details of the proof are very similar to the proof of our main theorem \ref{main} 
  and most of it is available in \citep{HM-13d} in section 2.
 
 \begin{proof}
  The conclusion about the rank 2 free group follows easily from the Tit's Alternative work of Bestvina-Feighn-handel \citep{BFH-00} Lemma 3.4.2, which gives us some integer $M_0$ such that 
  for every $m,n \geq M_0$ the group $\langle \phi^m, \psi^n \rangle$ is a free group of rank 2.
  
  For the conclusion about being fully irreducible relative to $\mathcal{F}$, suppose that the conclusion is false.

	$\Longrightarrow$ For every $M \geq M_0$, there exists some $m(M), n(M)$ such that the group $\langle \phi^m, \psi^n \rangle$ contains at least one 
	non trivial reduced word $\xi_M$ which is not the powers of generators themselves or their conjugates, and $\xi_M$ is not fully irreducible relative to $\mathcal{F}$.

	Next, using the conclusions from our relativized pingpong lemma earlier in this section, $\mathcal{F}$ is carried by $\mathcal{A}_{na}(\Lambda^\pm_\xi)$ and  $\mathcal{A}_{na}(\Lambda^\pm_\xi)$ is caried by  $\mathcal{A}_{na}(\Lambda^\pm_\phi)$ and  
   $\mathcal{A}_{na}(\Lambda^\pm_\psi)$.
   
	$\Longrightarrow$ $\mathcal{A}_{na}(\Lambda^\pm_{\xi_M}) = \mathcal{F}$.
	
	$\Longrightarrow$ $\Lambda^\pm_{\xi_M}$ is non-geometric.
	
	Also the additional co-edge $\geq 2$ condition tells us that $\Lambda^\pm_{\xi_M}$ cannot fill relative to $\mathcal{F}$. This means that the 
	free factor system $\mathcal{F}_M  = \mathcal{F}_{supp}(\mathcal{F}, \Lambda^\pm_{\xi_M})$ is a proper free factor system.
	
	Now we proceed exactly in the same fashion as the proof of theorem I in the non-geometric case goes in section 2.4 of \citep{HM-13d}. The key idea is to use 
	stallings graph to drive up $\mathcal{F}_M $ and arrive at a contradiction exactly in a similar fashion to the proof of our main theorem.
	
	This is acheived in the proof by showing that if $M$ is sufficiently large then, we have $\mathcal{F}_\phi , \mathcal{F}_\psi \sqsubset \mathcal{F}_M$. 
	This implies that $\mathcal{F}_{supp}(\mathcal{F}_\phi, \mathcal{F}_\psi)$ is proper. But this contradicts the condition that $\{\Lambda^\pm_\phi\} \cup \{\Lambda^\pm_\psi\}$ fill.

 \end{proof}

 We end this section with a corollary that is a direct application of the above theorem.

 \begin{corollary}
 Given a free factor system $\mathcal{F}$ with coedge number $\geq 2$, and given $\phi, \psi \in \out$ , if $\phi, \psi$ are fully irreducible relative 
 to $\mathcal{F}$, with corresponding invariant lamination pairs $\Lambda^\pm_\phi, \Lambda^\pm_\psi$ (as in the equivalence condition \ref{relind})such that the pair $\{\Lambda^+_\phi , \Lambda^-_\phi\}$ 
 is disjoint from the pair 
 $\{\Lambda^+_\psi, \Lambda^-_\psi\}$, then there exists an integer $M \geq 1$ such that for any $m,n \geq M$ the group $\langle \phi^m, \psi^n\rangle$ is a free group 
 of rank 2 and every element of this group is fully irreducible relative to $\mathcal{F}$.
 \end{corollary}
 
 \begin{proof}
  The only thing to observe here is the disjoint property of the lamination sets makes it satisfy all the conditions of the definition of relative independence, except the technical condition that gives us duality of 
  the lamination pairs. However, as we have noted, the lamination for an element which is fully irreducible relative to $\mathcal{F}$, we can directly prove the 
  duality of laminations arising from pingpong in similar lines as in \ref{duality}.
 \end{proof}

%
\bibliographystyle{plainnat}
\bibliography{biblo_R1}
\end{document}